\def\R{\mathbb{R}}
\newtheorem{theorem}{Theorem}
\newtheorem{lemma}[theorem]{Lemma}
\newenvironment{proof}{\noindent {\it Proof}~}{}
\title{Convergence of the Modified Craig--Sneyd scheme for two-dimensional 
convection-diffusion equations\\ with mixed derivative term}
\author{Karel~J.~in 't Hout\footnote{Department of Mathematics and Computer Science,
University of Antwerp, Middelheimlaan 1, B-2020 Antwerp, Belgium.
\mbox{Email}: \texttt{\{karel.inthout,maarten.wyns\}@uantwerpen.be}.}
~and Maarten Wyns\footnotemark[\value{footnote}]
}
\date{\today}
\begin{document}

\maketitle

\begin{abstract}
\noindent
We consider the Modified Craig--Sneyd (MCS) scheme which forms a prominent time stepping method of the Alternating Direction Implicit type for multidimensional time-dependent convection-diffusion equations with mixed spatial derivative terms.
Such equations arise often, notably, in the field of financial mathematics.
In this paper a first convergence theorem for the MCS scheme is proved where the obtained bound on the global temporal discretization errors has the essential property that it is independent of the (arbitrarily small) spatial mesh width from the semidiscretization.
The obtained theorem is directly pertinent to two-dimensional convection-diffusion equations with mixed derivative term.
Numerical experiments are provided that illustrate our result.
\end{abstract}
\vspace{0.2cm}\noindent
{\small\textbf{Key words:} Initial-boundary value problems, convection-diffusion equations, ADI splitting schemes, convergence analysis.}
\vspace{3mm}
\normalsize

\setcounter{equation}{0}
\section{Introduction}\label{intro}

Semidiscretization by finite difference or finite volume methods of initial-boundary value problems for multidimensional time-dependent convection-diffusion equations leads to large systems of stiff ordinary differential equations (ODEs),
\begin{equation}
U'(t)=F(t,U(t)) \quad (0 \leq t \leq T), \quad U(0)=U_{0},
\label{eq:ODE}
\end{equation}
with given vector-valued function $F: [0,T]\times \R^m \rightarrow \R^m$ and given initial vector $U_{0}\in \R^m$, where integer $m\ge 1$ is the number of spatial grid points. 
For the effective time discretization of such semidiscrete systems, operator splitting schemes of the Alternating Direction Implicit (ADI) type are widely considered. 
Recently, ADI schemes have been studied for the situation where mixed spatial derivative terms are present in the convection-diffusion equation.
Mixed derivative terms are ubiquitous, notably, in the field of financial mathematics.
There they arise due to correlations between the underlying stochastic processes.
In the past years a variety of positive results has been derived in the literature on the {\it stability}\, of ADI schemes in the presence of mixed derivative terms, see e.g.~\cite{CS88,IHM11,IHM13,IHW07,IHW09}.
However, a useful {\it convergence} analysis relevant to this important situation is still in its infancy.
Our aim in the present paper is to contribute to this analysis.

Let the semidiscrete function $F$ be decomposed as
\begin{equation}
\label{eq:Decomposition}
F(t,v) = F_{0}(t,v) + F_{1}(t,v) + \cdots + F_{k}(t,v) \quad (0 \leq t \leq T,~v \in \R^m),
\end{equation}
where $F_{0}$ represents all mixed spatial derivative terms and $F_{j}$, for $1 \leq j \leq k$, represents all spatial derivative terms in the $j$-th direction.
For the time discretization of \eqref{eq:ODE} we consider a prominent scheme of the ADI type.
Let $\theta > 0$ be a given parameter, let $\Delta t>0$ be a given time step size and set $t_{n}=n \Delta t$ for integers $n\ge 0$.
Then the \textit{Modified Craig--Sneyd (MCS) scheme} defines approximations $U_{n}$ to $U(t_{n})$ successively for $n=1,2,3,\ldots$ with $n \Delta t \le T$ through
\begin{equation}
\label{eq:MCS}
\left\{\begin{array}{l}
Y_0 = U_{n-1}+\Delta t\, F(t_{n-1},U_{n-1}), \\\\
Y_j = Y_{j-1}+\theta\Delta t \left(F_j(t_n,Y_j)-F_j(t_{n-1},U_{n-1})\right),
\quad j=1,2,\ldots,k, \\\\
\widehat{Y}_0 = Y_0+ \theta \Delta t \left(F_0(t_n,Y_k)-F_0(t_{n-1},U_{n-1})\right),\\\\
\widetilde{Y}_0 = \widehat{Y}_0+ (\tfrac{1}{2}-\theta )\Delta t \left(F(t_n,Y_k)-F(t_{n-1},U_{n-1})\right),\\\\
\widetilde{Y}_j = \widetilde{Y}_{j-1}+\theta\Delta t \,(F_j(t_n,\widetilde{Y}_j)-F_j(t_{n-1},U_{n-1})),
\quad j=1,2,\ldots,k, \\\\
U_n = \widetilde{Y}_k.
\end{array}\right.
\end{equation}
The MCS scheme \eqref{eq:MCS} was introduced by in 't Hout \& Welfert \cite{IHW09}.
It starts with an explicit Euler predictor stage, which is followed by $k$ implicit but unidirectional corrector stages.
Then a second explicit stage is performed, followed again by $k$ implicit unidirectional corrector stages.
In the special case where $\theta = \frac{1}{2}$ one obtains the Craig--Sneyd (CS) scheme, proposed in \cite{CS88}.
Note that the $F_0$ part, representing all mixed derivative terms, is always treated in an explicit manner by \eqref{eq:MCS}.

In \cite{IHM11,IHM13,IHW09,M14} positive stability results were proved for the MCS scheme under convenient lower bounds on $\theta$, guaranteeing unconditional stability in the von Neumann sense pertinent to various classes of multidimensional convection-diffusion equations with mixed derivative terms.
A relevant convergence analysis is still open in the literature.
It can be verified by standard arguments that if natural stability and smoothness assumptions hold, then the MCS scheme is convergent of order two for fixed, nonstiff ODE systems.
It is well-known in the literature, however, that this standard convergence analysis of time stepping schemes has limited relevance for the application to semidiscrete systems \eqref{eq:ODE}.
In this analysis, the size of the error constant in the obtained bound for the global temporal discretization errors may become arbitrarily large as the spatial mesh width from the semidiscretization tends to zero ($m\rightarrow \infty$), which renders this bound impractical.

In the present paper we shall derive a first useful convergence bound for the MCS scheme that is directly relevant to two-dimensional convection-diffusion equations with mixed derivative term.
Our analysis is inspired by that of Hundsdorfer \cite{Hun92,Hun02}, cf.~also Hundsdorfer~\&~Verwer \cite{HV03}, for operator splitting schemes applied to multidimensional convection-diffusion-reaction problems without mixed derivative terms and it extends our recent work~\cite{IHW14} for the Hundsdorfer--Verwer (HV) scheme.

\setcounter{equation}{0}
\section{Convergence analysis}\label{sec:results}

\subsection{Preliminaries}
Assume that
\[
F(t,v) = Av + g(t)\quad {\rm and} \quad F_j(t,v )=A_jv +g_j(t) \quad (0 \leq t \leq T,~ v \in \mathbb{R}^{m},~ 0\le j\le k),
\]
where $A$, $A_{j}$ ($0\leq j \leq k$) are given real $m \times m$-matrices and $g$, $g_{j}$ ($0 \leq j \leq k$) are given real $m$-vector valued functions.
Let $I$ denote the $m \times m$ identity matrix.
For convenience, define the matrices
\begin{equation*}
Z=\Delta t A,\quad Z_{j} = \Delta t A_{j}, \quad Q_{j} = I - \theta Z_{j} \quad (0 \leq j \leq k),\quad P=Q_{1}Q_{2}\cdots Q_{k}.
\end{equation*}
Consider the naturally scaled inner product $(v,w)=\frac{1}{m} v^{\rm T} w$ for $v, w \in \R^{m}$ with induced vector and matrix norms $\Vert \cdot \Vert_{2}$.
We shall assume that
\begin{equation*}
\label{eq:AssumptionA}
(A_{j}v, v) \leq 0 \quad {\rm whenever}~ v \in \R^{m},~ 1\leq j \leq k.
\end{equation*} 
This assumption is often fulfilled when dealing with semidiscrete systems stemming from time-dependent convection-diffusion equations, cf.~e.g.~\cite{Hun92,Hun02,HV03}. 
It implies that the $Q_{j}$ and $P$ are invertible and 
\begin{equation}
\label{eq:PQbounded}
\Vert Q_{j}^{-1} \Vert_{2} \leq 1\quad (1 \leq j \leq k),\quad \Vert P^{-1} \Vert_{2} \leq 1.
\end{equation}

\subsection{Error recursion}
For the convergence analysis, we consider along with \eqref{eq:MCS} the perturbed scheme
\begin{equation}
\label{eq:MCSPerturbed}
\left\{ \begin{array}{lr}
     	Y_{0}^{*} = U_{n-1}^{*} + \Delta t\, F(t_{n-1},U_{n-1}^{*}) + \rho_{0},& \\\\
 	 	Y_{j}^{*} = Y_{j-1}^{*} + \theta\Delta t \left( F_{j}(t_{n},Y_{j}^{*}) - F_{j}(t_{n-1},U_{n-1}^{*}) \right) + \rho_{j}, & j = 1,2,\ldots,k, \\\\
 	 	\widehat{Y}_{0}^{*} = Y_{0}^{*} + \theta \Delta t \left( F_{0}(t_{n},Y_{k}^{*}) - F_{0}(t_{n-1},U_{n-1}^{*}) \right) + \widehat{\rho}_{0}, & \\\\
 	 	\widetilde{Y}_{0}^{*} = \widehat{Y}_{0}^{*} + ( \tfrac{1}{2} - \theta ) \Delta t \left(F(t_{n},Y_{k}^{*}) - F(t_{n-1},U_{n-1}^{*}) \right) + \widetilde{\rho}_{0}, & \\\\
 	 	\widetilde{Y}_{j}^{*} = \widetilde{Y}_{j-1}^{*} + \theta\Delta t\, ( F_{j}(t_{n},\widetilde{Y}_{j}^{*}) - F_{j}(t_{n-1},U_{n-1}^{*})) + \widetilde{\rho}_{j}, & j = 1,2,\ldots,k, \\\\
 	 	U_{n}^{*} = \widetilde{Y}_{k}^{*}.
     	\end{array} \right.
\end{equation} 
Here $\rho_{j}, \widetilde{\rho}_{j} \in \R^m$ ($0\le j \le k$) and $\widehat{\rho}_{0}\in \R^m$ denote arbitrary given perturbation vectors.
These perturbations may depend on the step number $n$.
For ease of presentation, this is omitted in the notation.
In the following we derive a useful formula for the error
\[
e_{n}=U_{n}^{*} - U_{n}.
\]
Define the auxiliary variables
$$
\varepsilon_{j} = Y_{j}^{*} - Y_{j}, \quad \widetilde{\varepsilon}_{j} = \widetilde{Y}_{j}^{*} - \widetilde{Y}_{j}\quad (0 \leq j \leq k)\quad {\rm and}\quad
\widehat{\varepsilon}_{0} = \widehat{Y}_{0}^{*} - \widehat{Y}_{0}.
$$ 
From (\ref{eq:MCS}), (\ref{eq:MCSPerturbed}) one directly obtains
\begin{eqnarray*}
\varepsilon_{0} &=& e_{n-1} + \Delta t A e_{n-1} + \rho_{0} \\
			&=& (I + Z) e_{n-1} + \rho_{0}, \\
\varepsilon_{j} &=& \varepsilon_{j-1} + \theta \Delta t (A_{j} \varepsilon_{j} - A_{j} e_{n-1} ) + \rho_{j}, \qquad 1 \leq j \leq k.
\end{eqnarray*}
The latter equation can readily be rewritten as
\begin{equation}\label{eq:vareps}
\varepsilon_{j} = e_{n-1} + Q_{j}^{-1} ( \varepsilon_{j-1} - e_{n-1} + \rho_{j} ), \qquad 1 \leq j \leq k.
\end{equation}
Next, 
\begin{eqnarray*}
\widehat{\varepsilon}_{0} &=& \varepsilon_{0} + \theta Z_{0} ( \varepsilon_{k} - e_{n-1} )  + \widehat{\rho}_{0} \\
    &=& (I + Z - \theta Z_{0} ) e_{n-1} + \theta Z_{0} \varepsilon_{k} + \rho_{0} + \widehat{\rho}_{0}, \\
\widetilde{\varepsilon}_{0} &=& \widehat{\varepsilon}_{0} + (\tfrac{1}{2} - \theta) Z ( \varepsilon_{k} - e_{n-1} ) + \widetilde{\rho}_{0} \\
	&=& (I+ (\tfrac{1}{2}+\theta ) Z - \theta Z_{0} ) e_{n-1} + (\theta Z_{0} + (\tfrac{1}{2}-\theta) Z ) \varepsilon_{k} + \rho_{0} + \widehat{\rho}_{0} + \widetilde{\rho}_{0}
\end{eqnarray*}
and analogously to \eqref{eq:vareps} there holds
\begin{equation}\label{eq:varepstilde}
\widetilde{\varepsilon}_{j} = e_{n-1} + Q_{j}^{-1} ( \widetilde{\varepsilon}_{j-1} - e_{n-1} + \widetilde{\rho}_{j} ), \qquad 1 \leq j \leq k.
\end{equation}
Using \eqref{eq:varepstilde} together with the obtained expression for $\widetilde{\varepsilon}_{0}$, it follows that
\begin{eqnarray*}
e_{n} = \widetilde{\varepsilon}_{k} &=& e_{n-1} + Q_{k}^{-1} (\widetilde{\varepsilon}_{k-1} - e_{n-1} + \widetilde{\rho}_{k} ) \\
	&=& e_{n-1} + Q_{k}^{-1}Q_{k-1}^{-1}(\widetilde{\varepsilon}_{k-2} - e_{n-1} + \widetilde{\rho}_{k-1}) + Q_{k}^{-1}\widetilde{\rho}_{k} \\
	&\vdots& \\
	&=& e_{n-1} + P^{-1}( \widetilde{\varepsilon}_{0} - e_{n-1} +  \widetilde{\rho}_{1} ) + \sum_{j=2}^{k} Q_{k}^{-1}Q_{k-1}^{-1}\cdots Q_{j}^{-1} \widetilde{\rho}_{j} \\
	&=& e_{n-1} + P^{-1}( -\theta Z_{0} + (\tfrac{1}{2}+\theta)Z )e_{n-1} + P^{-1}(\theta Z_{0} + (\tfrac{1}{2}-\theta)Z)\varepsilon_{k} \\
	&& + \ P^{-1} (\rho_{0} + \widehat{\rho}_{0} + \widetilde{\rho}_{0}) + \sum_{j=1}^{k} Q_{k}^{-1}Q_{k-1}^{-1}\cdots Q_{j}^{-1} \widetilde{\rho}_{j}. 
\end{eqnarray*}
In a similar way, using \eqref{eq:vareps}, it is seen that
\begin{eqnarray*}
\varepsilon_{k} &=& e_{n-1} + P^{-1}( \varepsilon_{0} - e_{n-1} + \rho_{1}) + \sum_{j=2}^{k} Q_{k}^{-1}Q_{k-1}^{-1}\cdots Q_{j}^{-1} \rho_{j} \\
			&=& (I + P^{-1}Z) e_{n-1} + P^{-1}\rho_{0} + \sum_{j=1}^{k} Q_{k}^{-1}Q_{k-1}^{-1}\cdots Q_{j}^{-1} \rho_{j}.
\end{eqnarray*}
Inserting the obtained expression for $\varepsilon_{k}$ into that for $e_{n}$, we arrive at the useful recursion formula
\begin{equation}\label{eq:recursion}
e_{n} = R e_{n-1} + d_{n}
\end{equation}
with {\it stability matrix}
\begin{equation}\label{eq:stabmatrix}
R = I + P^{-1}Z + P^{-1} ( \theta Z_{0} + (\tfrac{1}{2} - \theta) Z ) P^{-1}Z 
\end{equation}
and vector
\begin{eqnarray}\label{eq:localerror1}
d_{n} &=& P^{-1} ( \theta Z_{0} + (\tfrac{1}{2} - \theta) Z )( P^{-1} \rho_{0} + \sum_{j=1}^{k} Q_{k}^{-1} Q_{k-1}^{-1} \cdots Q_{j}^{-1} \rho_{j} ) \nonumber \\
       &&  + \ P^{-1}( \rho_{0} + \widehat{\rho}_{0} + \widetilde{\rho}_{0} ) + \sum_{j=1}^{k} Q_{k}^{-1} Q_{k-1}^{-1} \cdots Q_{j}^{-1} \widetilde{\rho}_{j}. 
\end{eqnarray} 
The recursion \eqref{eq:recursion} implies
\begin{equation}\label{eq:recursion2}
e_{n} = R^{n}e_{0} + \sum_{j=1}^{n} R^{n-j} d_{j}.
\end{equation}

\subsection{Local discretization errors  }\label{sec:LocalError}
We now consider the perturbed MCS scheme \eqref{eq:MCSPerturbed} where the perturbations are such that
$$
U_{n-1}^{*} = U(t_{n-1}),\quad
Y_{j}^{*}=\widetilde{Y}_{j}^{*} = U(t_{n})\quad (0 \leq j \leq k)\quad {\rm and}\quad
\widehat{Y}_{0}^{*} = U(t_{n}).
$$ 
With this choice, $d_n$ is the {\it local discretization error} and $e_n = U(t_{n}) - U_{n}$ the {\it global discretization error} in the $n$-th step.

For the convergence analysis of any given time stepping scheme applied to semidiscrete PDEs to be practical, it is imperative that the pertinent stability and error bounds are not adversely affected by the (arbitrarily small) spatial mesh width employed in the semidiscretization.
Accordingly, {\it by the notation $\mathcal{O}\left((\Delta t)^p\right)$ we shall always mean that the norm $\Vert \cdot \Vert_{2}$ of the term under consideration is bounded by a positive constant times $(\Delta t)^p$ where the constant is independent of the spatial mesh width, the time step size $\Delta t>0$ and the step number $n\ge 1$ with $n \Delta t \le T$. 
If $p=0$, then we write $\mathcal{O}\left(1\right)$ for short.}

Throughout this paper we will assume that the MCS scheme is {\it stable} in the sense that there exists a constant $M$ such that the stability matrix satisfies $\Vert R^{n} \Vert_{2} \leq M$ uniformly in the spatial mesh width, $\Delta t >0$ and integer $n\ge 1$.
Thus, $R^n = \mathcal{O}\left(1\right)$.

To arrive at an optimal convergence order $p$, it turns out that a careful investigation of the local discretization errors $d_n$ is required.
Define 
\[
\varphi_{j}(t)=F_{j}(t,U(t)) \quad {\rm for} \quad 0\le t\le T,~ 0 \le j \le k. 
\]
We assume that the vector functions $\varphi_{j}$ are twice continuously differentiable and that their second derivatives are bounded on $[0,T]$ uniformly in the spatial mesh width. 
Notice that $U'(t) = \sum_{j=0}^k \varphi_{j}(t)$, so the above smoothness condition for the $\varphi_{j}$ implies one for $U$ too.
By Taylor expansion it directly follows that
\begin{subeqnarray}
\label{eq:TruncationErrors}
 \rho_{0} &=& U''(t_{n-1})\ \tfrac{1}{2}(\Delta t)^{2}  + \mathcal{O}\left((\Delta t)^{3}\right), \\
 \widehat{\rho}_{0} &=& - \varphi_{0}'(t_{n-1})\ \theta (\Delta t)^{2} + \mathcal{O}\left((\Delta t)^{3}\right), \\
 \widetilde{\rho}_{0} &=& - U''(t_{n-1})\ (\tfrac{1}{2} - \theta) (\Delta t)^{2}  + \mathcal{O}\left((\Delta t)^{3}\right), \\
 \rho_{j} &=& \widetilde{\rho}_{j} ~~=~ - \varphi_{j}'(t_{n-1})\ \theta (\Delta t)^{2} +\mathcal{O}\left((\Delta t)^{3}\right), \qquad 1 \le j \le k.
\end{subeqnarray}
Since $\rho_{j} =\widetilde{\rho}_{j}$ for all $1 \le j \le k$, the expression \eqref{eq:localerror1} for $d_{n}$ becomes
$$ 
d_{n} = \left( I + P^{-1} ( \theta Z_{0} + (\tfrac{1}{2} - \theta ) Z ) \right) ( P^{-1} \rho_{0} + \sum_{j=1}^{k} Q_{k}^{-1}Q_{k-1}^{-1}\cdots Q_{j}^{-1} \rho_{j} ) + P^{-1}( \widehat{\rho}_{0} + \widetilde{\rho}_{0} ). 
$$
Inserting the expansions \eqref{eq:TruncationErrors} into this and taking into account the uniform boundedness of the matrices $Q_{j}^{-1}$ ($1 \le j \le k$), see \eqref{eq:PQbounded}, we obtain
\begin{eqnarray*}
d_{n} &=& \left( I + P^{-1} ( \theta Z_{0} + (\tfrac{1}{2} - \theta ) Z ) \right) P^{-1} U''(t_{n-1})\ \tfrac{1}{2}(\Delta t)^{2} \nonumber \\ 
		&& - \ \left( I + P^{-1} ( \theta Z_{0} + (\tfrac{1}{2} - \theta ) Z ) \right) \sum_{j=1}^{k} Q_{k}^{-1}Q_{k-1}^{-1}\cdots Q_{j}^{-1} \varphi_{j}'(t_{n-1})\ \theta (\Delta t)^{2} \nonumber \\
		&& - \ P^{-1} \varphi_{0}'(t_{n-1})\ \theta (\Delta t)^{2} - P^{-1} U''(t_{n-1})\ (\tfrac{1}{2}- \theta) (\Delta t)^{2}  +\phantom{\sum_{j=1}^{k}} \nonumber \\
		&& + \ \left( I + P^{-1} ( \theta Z_{0} + (\tfrac{1}{2} - \theta ) Z ) \right) \mathcal{O}\left((\Delta t)^{3}\right) +\mathcal{O}\left((\Delta t)^{3}\right). 
\end{eqnarray*}
Using that $U''(t) = \sum_{j=0}^{k} \varphi_{j}'(t) $ the latter expression for $d_n$ can be written in the following form, which will be employed in the next section,
\begin{eqnarray}
\label{eq:DiscretizationErrors}
d_{n} &=& P^{-1} ( \theta Z_{0} + (\tfrac{1}{2} - \theta) Z ) P^{-1} ( \tfrac{1}{2}U''(t_{n-1}) - \theta \sum_{j=1}^{k} \varphi_{j}'(t_{n-1}) )\ (\Delta t)^{2} \nonumber \\
&& + \ \left( I + P^{-1} ( \theta Z_{0} + (\tfrac{1}{2} - \theta) Z ) \right) P^{-1} \sum_{j=2}^{k} (I - Q_{1}Q_{2}\cdots Q_{j-1} ) \varphi_{j}'(t_{n-1})\ \theta (\Delta t)^{2}  \nonumber \\
&& + \ \left( I + P^{-1} ( \theta Z_{0} + (\tfrac{1}{2} - \theta ) Z ) \right) \mathcal{O}\left((\Delta t)^{3}\right) +\mathcal{O}\left((\Delta t)^{3}\right). 
\end{eqnarray} 

\subsection{Main convergence theorem  }
From \eqref{eq:DiscretizationErrors} and the specific dependence of the matrices $Z_0$, $Z$, $Q_j$ ($1\le j\le k$) on $\Delta t$ it is readily seen that for any given {\it fixed}\, semidiscrete system the local errors are bounded by a constant times $(\Delta t)^{3}$. 
Next, formula \eqref{eq:recursion2} together with the stability of the MCS scheme directly imply a well-known estimate for the global discretization errors in terms of the local discretization errors (note that $e_0=0$),
\begin{equation*}
\label{eq:GlobalErrorBound}
\Vert e_n \Vert_{2} \le M \sum_{j=1}^{n} \Vert d_j \Vert_{2}.
\end{equation*}
Hence, it follows that the global errors are bounded by a constant times $(\Delta t)^{2}$, that is second-order convergence.
However, as the spatial mesh width decreases (and the size $m$ of the semidiscrete system increases), the pertinent error constant can become arbitrarily large due to negative powers of the spatial mesh width occurring in the matrices $A_j$ ($0\le j\le k$).
Clearly, this renders the global error bound obtained in this way impractical.

In the following we shall present for the MCS scheme with $k=2$ a useful second-order convergence result, which is valid uniformly in the spatial mesh width.
We apply a key lemma from Hundsdorfer \cite{Hun92}, cf.~also \cite{Hun02,HV03}.
For completeness, its (short) proof is included. 
\begin{lemma}
\label{lemma:Hundsdorfer}
Let $\alpha >0$.
If the time stepping scheme is stable and the local discretization errors satisfy 
\begin{equation}\label{eq:LocalErrorDecomp}
d_{j} = (R-I)\xi_{j} + \eta_{j} \quad {\rm with} \quad \xi_{j} = \mathcal{O}\left((\Delta t)^{\alpha} \right),~
\xi_{j} - \xi_{j-1}= \mathcal{O}\left((\Delta t)^{\alpha +1} \right),~ \eta_{j} = \mathcal{O}\left((\Delta t)^{\alpha +1} \right), 
\end{equation}
then for the global discretization errors one has that $e_{n} = \mathcal{O}\left((\Delta t)^{\alpha} \right)$.
\end{lemma}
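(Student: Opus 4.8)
The plan is to work directly from the closed-form error representation. Because $e_0 = 0$ and the scheme is stable, formula \eqref{eq:recursion2} reads $e_n = \sum_{j=1}^n R^{n-j} d_j$. Substituting the decomposition \eqref{eq:LocalErrorDecomp} splits this into
\[
e_n = \sum_{j=1}^n R^{n-j}(R-I)\xi_j + \sum_{j=1}^n R^{n-j}\eta_j .
\]
The second sum is handled at once: by stability $\Vert R^{n-j} \Vert_2 \le M$, and since there are at most $n$ summands with $n\Delta t \le T$, each of size $\mathcal{O}((\Delta t)^{\alpha+1})$, this sum is $\mathcal{O}((\Delta t)^{\alpha})$.

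For the first sum I would use summation by parts. Writing $R^{n-j}(R-I) = R^{n-j+1} - R^{n-j}$ and shifting indices, a short computation gives the Abel-type identity
\[
\sum_{j=1}^n \bigl(R^{n-j+1} - R^{n-j}\bigr)\xi_j = R^n \xi_1 - \xi_n + \sum_{j=1}^{n-1} R^{n-j}(\xi_{j+1} - \xi_j).
\]
I would then estimate the three terms on the right separately. The first two are $\mathcal{O}((\Delta t)^{\alpha})$ directly: $\Vert R^n \xi_1 \Vert_2 \le M \Vert \xi_1 \Vert_2$ and $\Vert \xi_n \Vert_2$ are both of that order by the hypotheses on the $\xi_j$. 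For the remaining sum, stability again gives $\Vert R^{n-j} \Vert_2 \le M$, and the increments satisfy $\xi_{j+1} - \xi_j = \mathcal{O}((\Delta t)^{\alpha+1})$, so with at most $n \le T/\Delta t$ terms it is once more $\mathcal{O}((\Delta t)^{\alpha})$.

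Adding the four estimates yields $e_n = \mathcal{O}((\Delta t)^{\alpha})$, uniformly in the spatial mesh width, in $\Delta t > 0$ and in $n$ with $n\Delta t \le T$, which is the claim. There is no genuine obstacle here beyond carefully tracking the index shifts in the summation by parts. The conceptual point --- and the reason one insists on the structured form \eqref{eq:LocalErrorDecomp} rather than merely $d_j = \mathcal{O}((\Delta t)^{\alpha+1})$ --- is that a direct bound on $\sum_j R^{n-j} d_j$ loses one power of $\Delta t$ through the $n \sim T/\Delta t$ summands; the factor $R-I$ is precisely what permits the summation by parts that converts this potential loss into the one-order-higher smallness of the differences $\xi_{j+1} - \xi_j$.
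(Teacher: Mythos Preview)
Your proof is correct and follows essentially the same approach as the paper: both start from the closed-form error representation \eqref{eq:recursion2} with $e_0=0$, apply Abel summation to the $(R-I)\xi_j$ part to obtain the identity $R^n\xi_1 - \xi_n + \sum R^{n-j}(\xi_{j+1}-\xi_j)$ (the paper writes this equivalently as $\sum_{j=2}^n R^{n-j+1}(\xi_j-\xi_{j-1})$), and then bound each piece using stability and $n\Delta t\le T$. Your closing remark on why the factor $R-I$ is essential is a nice addition but does not alter the argument.
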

\begin{proof}
Consider relation \eqref{eq:recursion2} for the global error.
Inserting $d_{j} = (R-I)\xi_{j} + \eta_{j}$ and $e_0=0$ gives
\[
e_{n} = R^{n}\xi_{1} - \xi_{n} + \sum_{j=2}^{n} R^{n-j+1} (\xi_{j} - \xi_{j-1}) + \sum_{j=1}^{n} R^{n-j} \eta_{j}. 
\]
By stability of the time stepping scheme (cf.~Section \ref{sec:LocalError}), this leads to the bound
\[
\Vert e_n \Vert_{2} \le M \Vert \xi_{1} \Vert_{2} + \Vert \xi_{n} \Vert_{2} + M \sum_{j=2}^{n} \Vert \xi_{j} - \xi_{j-1} \Vert_{2} + M \sum_{j=1}^{n}  \Vert \eta_{j} \Vert_{2}.
\]
Using the properties of $\xi_{j}$ and $\eta_{j}$ in \eqref{eq:LocalErrorDecomp} and $n \Delta t \leq T$, the assertion of the lemma follows.
\begin{flushright}
\mbox{\tiny $\blacksquare$}
\end{flushright}
\end{proof}

If $k=2$, then the obtained expression \eqref{eq:DiscretizationErrors} for the local error simplifies to $d_{n} = d_{n}^{(1)} + d_{n}^{(2)} + d_{n}^{(3)}$ with
\begin{equation*}
\label{eq:LokaleFoutOpgesplitst}
\begin{array}{l}
d_{n}^{(1)} = P^{-1} (\theta Z_{0} + (\tfrac{1}{2} - \theta) Z ) P^{-1} ( \tfrac{1}{2} U''(t_{n-1}) - \theta \sum_{j=1}^{2} \varphi_{j}'(t_{n-1}) )\ (\Delta t)^{2}, \\\\
d_{n}^{(2)} = \left(I + P^{-1} (\theta Z_{0} + (\tfrac{1}{2} - \theta) Z ) \right) P^{-1} Z_{1}\, \varphi_{2}'(t_{n-1})\ \theta^{2} (\Delta t)^{2}, \\\\
d_{n}^{(3)} = \left(I + P^{-1} (\theta Z_{0} + (\tfrac{1}{2} - \theta) Z ) \right) \mathcal{O}\left( (\Delta t)^{3} \right) + \mathcal{O}\left( (\Delta t)^{3} \right).
\end{array}
\end{equation*}
Using formula \eqref{eq:stabmatrix} for the stability matrix, the first two components of $d_{n}$ can be rewritten as (assuming the pertinent inverses exist),
\begin{equation*}
\label{eq:Decomposition2}
\begin{array}{llr}
d_{n}^{(1)} &= (R - I) Z^{-1} P (I + P^{-1} (\theta Z_{0} + (\tfrac{1}{2}-\theta) Z ))^{-1} d_{n}^{(1)} \\\\
			&= (R - I) Z^{-1} (\theta Z_{0} + (\tfrac{1}{2}-\theta) Z )(P+\theta Z_{0} + (\tfrac{1}{2} -\theta )Z )^{-1} ( \tfrac{1}{2} U''(t_{n-1}) - \theta \sum_{j=1}^{2} \varphi_{j}'(t_{n-1}) )\ (\Delta t)^{2}, \\\\
d_{n}^{(2)} &= (R - I)  Z^{-1}Z_{1} \varphi_{2}'(t_{n-1})\ \theta^{2} (\Delta t)^{2}.
\end{array}
\end{equation*}
Upon invoking Lemma \ref{lemma:Hundsdorfer} with $\alpha=2$, we then arrive at the main result of this paper.
\begin{theorem}
\label{th:MCSConvergentOrder2}
Let $k=2$.
Assume that the $\varphi_{j}$ ($j=0,1,2$) are twice continuously differentiable and their second derivatives are bounded on $[0,T]$ uniformly in the spatial mesh width. 
Assume $(A_{j}v,v) \leq 0 $ whenever $v \in \mathbb{R}^{m}$ and $j=1,2$.
Assume the MCS scheme is stable, the matrices $A$ and $P + \theta Z_{0} + (\tfrac{1}{2} - \theta)Z$ are invertible and the matrices 
\begin{equation}
\label{eq:fourmatrices}
A^{-1}A_{1},~ A^{-1}A_{2},~ I + P^{-1} ( \theta Z_{0} + (\tfrac{1}{2} - \theta) Z),~ (P + \theta Z_{0} + (\tfrac{1}{2} - \theta)Z)^{-1}
\end{equation}
are all $\mathcal{O}\left(1 \right)$. 
Then the global discretization errors for the MCS scheme satisfy $e_{n} = \mathcal{O}\left( (\Delta t)^{2} \right)$.
\end{theorem}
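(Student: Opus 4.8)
The plan is to obtain the assertion from Lemma~\ref{lemma:Hundsdorfer} with $\alpha=2$; in fact, once the error recursion \eqref{eq:recursion}--\eqref{eq:stabmatrix} and the local-error expansion \eqref{eq:DiscretizationErrors} are available, essentially all the structural work has been done. For $k=2$ the local error splits as $d_n = d_n^{(1)} + d_n^{(2)} + d_n^{(3)}$ and, using \eqref{eq:stabmatrix}, the first two parts have just been written in the form $(R-I)(\,\cdot\,)$. I would therefore put $\eta_n := d_n^{(3)}$ and let $\xi_n$ be the sum of the vectors to which $R-I$ is applied in $d_n^{(1)}$ and in $d_n^{(2)}$, i.e.\ $(\Delta t)^{2}$ times the sum of $Z^{-1}(\theta Z_{0} + (\tfrac{1}{2}-\theta) Z)(P+\theta Z_{0} + (\tfrac{1}{2}-\theta)Z)^{-1}\big(\tfrac{1}{2} U''(t_{n-1}) - \theta\sum_{j=1}^{2}\varphi_{j}'(t_{n-1})\big)$ and $\theta^{2}Z^{-1}Z_{1}\,\varphi_{2}'(t_{n-1})$. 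By construction $d_{n} = (R-I)\xi_{n} + \eta_{n}$, so it only remains to verify the three size conditions in \eqref{eq:LocalErrorDecomp}.

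The bound $\eta_{n} = \mathcal{O}\left((\Delta t)^{3}\right)$ is immediate from the stated form of $d_{n}^{(3)}$ and the hypothesis that $I + P^{-1}(\theta Z_{0} + (\tfrac{1}{2}-\theta)Z)$ is $\mathcal{O}\left(1\right)$. For $\xi_{n} = \mathcal{O}\left((\Delta t)^{2}\right)$ the one delicate point is the apparently stiff factors $Z^{-1}(\theta Z_{0} + (\tfrac{1}{2}-\theta)Z)$ and $Z^{-1}Z_{1}$. Here I would use that $Z=\Delta t A$ and $Z_{j}=\Delta t A_{j}$ give $Z^{-1}Z_{j}=A^{-1}A_{j}$, together with the fact that in the affine setting of Section~\ref{sec:results} the decomposition \eqref{eq:Decomposition} forces $A = A_{0}+A_{1}+A_{2}$, whence $A^{-1}A_{0} = I - A^{-1}A_{1} - A^{-1}A_{2}$. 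Consequently $Z^{-1}(\theta Z_{0}+(\tfrac{1}{2}-\theta)Z) = \theta A^{-1}A_{0} + (\tfrac{1}{2}-\theta)I = (\tfrac{1}{2}+\theta)I - \theta A^{-1}A_{1} - \theta A^{-1}A_{2}$ and $Z^{-1}Z_{1}=A^{-1}A_{1}$ are both $\mathcal{O}\left(1\right)$ by the assumptions on the matrices in \eqref{eq:fourmatrices}. Combining this with the $\mathcal{O}\left(1\right)$ bound on $(P+\theta Z_{0}+(\tfrac{1}{2}-\theta)Z)^{-1}$ and with the uniform boundedness of $U''(t_{n-1})$ and the $\varphi_{j}'(t_{n-1})$ that follows from the smoothness assumption (recall $U''=\varphi_{0}'+\varphi_{1}'+\varphi_{2}'$), each of the two vectors making up $\xi_{n}/(\Delta t)^{2}$ is $\mathcal{O}\left(1\right)$, so $\xi_{n}=\mathcal{O}\left((\Delta t)^{2}\right)$.

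For the increment $\xi_{n}-\xi_{n-1}$ I would note that every matrix occurring in $\xi_{n}$ is independent of $n$, so only the smooth vector factors change between consecutive steps. Their one-step differences are $\mathcal{O}\left(\Delta t\right)$: writing $\varphi_{j}'(t_{n-1})-\varphi_{j}'(t_{n-2})=\int_{t_{n-2}}^{t_{n-1}}\varphi_{j}''(s)\,ds$ and $U''(t_{n-1})-U''(t_{n-2})=\sum_{j=0}^{2}\int_{t_{n-2}}^{t_{n-1}}\varphi_{j}''(s)\,ds$, these are bounded by $\Delta t$ times the uniform bound on the $\varphi_{j}''$. Multiplying by the $\mathcal{O}\left(1\right)$ matrix factors and the explicit factor $(\Delta t)^{2}$ yields $\xi_{n}-\xi_{n-1}=\mathcal{O}\left((\Delta t)^{3}\right)$. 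Invoking the standing stability assumption $R^{n}=\mathcal{O}\left(1\right)$ and Lemma~\ref{lemma:Hundsdorfer} with $\alpha=2$ then delivers $e_{n}=\mathcal{O}\left((\Delta t)^{2}\right)$.

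I expect the only genuinely substantive step --- the place where stiffness of the semidiscretization could have spoiled a mesh-uniform bound --- to be exactly the treatment of $Z^{-1}(\theta Z_{0}+(\tfrac{1}{2}-\theta)Z)$ and $Z^{-1}Z_{1}$: the prefactor $R-I$ extracted via \eqref{eq:stabmatrix} carries precisely one power of the stiff matrix $Z$, so these products collapse to the mesh-robust matrices $\theta A^{-1}A_{0}+(\tfrac{1}{2}-\theta)I$ and $A^{-1}A_{1}$. Everything else is routine bookkeeping of powers of $\Delta t$, noting that the factor $\theta Z_{0}+(\tfrac{1}{2}-\theta)Z=\mathcal{O}\left(\Delta t\right)$ is cancelled against $Z^{-1}$ and that $P+\theta Z_{0}+(\tfrac{1}{2}-\theta)Z\to I$ as $\Delta t\to 0$, so that no further difficulty arises.
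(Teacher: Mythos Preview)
Your proposal is correct and follows essentially the same route as the paper: the same splitting $d_{n}=d_{n}^{(1)}+d_{n}^{(2)}+d_{n}^{(3)}$, the same extraction of $(R-I)$ from $d_{n}^{(1)}$ and $d_{n}^{(2)}$ via \eqref{eq:stabmatrix}, and the same appeal to Lemma~\ref{lemma:Hundsdorfer} with $\alpha=2$. You simply spell out in more detail the verifications that the paper leaves implicit, in particular the reduction $Z^{-1}Z_{j}=A^{-1}A_{j}$ and the increment bound $\xi_{n}-\xi_{n-1}=\mathcal{O}\!\left((\Delta t)^{3}\right)$ via the mean-value/integral form of the smoothness assumption.
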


\subsection{Boundedness assumptions in Theorem \ref{th:MCSConvergentOrder2}  }
\label{subsec:AssumptionsBoundedness}
The assumptions concerning the boundedness of the matrices \eqref{eq:fourmatrices} in Theorem \ref{th:MCSConvergentOrder2} are similar to those made in \cite{Hun02} in order to prove convergence of the HV scheme with $k=1$. 
The uniform boundedness of $A^{-1}A_{1}$ and $A^{-1}A_{2}$ was also assumed there and is often fulfilled. 
If $k=1$, the assumption $(P + \theta Z_{0} + (\tfrac{1}{2} - \theta)Z)^{-1} = \mathcal{O}\left(1 \right)$ is closely related to the condition $(29)$ in \cite{Hun02} for the HV scheme. 
The assumption $I + P^{-1} ( \theta Z_{0} + (\tfrac{1}{2} - \theta) Z) = \mathcal{O}\left(1 \right)$ can be viewed as a counterpart of the condition $I - P^{-1} + \tfrac{1}{2} P^{-1}Z  = \mathcal{O}\left(1 \right)$ which was tacitly assumed in \cite{Hun02}.

For $k=2$ the conditions $I + P^{-1} ( \theta Z_{0} + (\tfrac{1}{2} - \theta) Z)= \mathcal{O}\left(1 \right)$ and $(P + \theta Z_{0} + (\tfrac{1}{2} - \theta)Z)^{-1} = \mathcal{O}\left(1 \right)$ are new in the literature. 
To gain insight into these conditions, we follow the well-known von Neumann framework and consider the two-dimensional convection-diffusion equation 
\begin{equation} 
\label{eq:ModelEquation}
u_{t} = d_{11}u_{xx} + 2d_{12}u_{xy} + d_{22}u_{yy} + c_{1}u_{x} + c_{2} u_{y}
\end{equation}
for $(x,y) \in (0,1) \times (0,1)$ and $0 \leq t \leq T$ with periodic boundary condition. 
Here $ c_{1}, c_{2}, d_{11}, d_{12}, d_{22}$ denote given real constants with 
\begin{equation}
\label{eq:Coefficients} 
d_{11} \geq 0, \quad d_{22} \geq 0, \quad \vert d_{12} \vert \leq \gamma \sqrt{ d_{11}d_{22} }\quad {\rm and}\quad \gamma \in [0,1].
\end{equation}
After semidiscretization of \eqref{eq:ModelEquation} by standard finite difference schemes on uniform rectangular grids, the analysis reduces to bounding from above the two scalar terms
\begin{equation}
\label{eq:scalarversions}
\vert 1 + \tfrac{1}{2}\tfrac{z_{0}}{p} + (\tfrac{1}{2}-\theta)\tfrac{z_{1}+z_{2}}{p} \vert \quad \mbox{and} \quad \vert p + \tfrac{1}{2} z_{0} + (\tfrac{1}{2}-\theta)(z_{1} + z_{2}) \vert^{-1}
\end{equation} 
with $p = (1 - \theta z_{1}) (1- \theta z_{2} )$ for all complex numbers $z_{0},z_{1},z_{2}$ satisfying
\begin{equation}
\label{eq:scalars}
\mathcal{R}z_{1}\leq 0,~~ \ \mathcal{R}z_{2} \leq 0,~~ \ \vert z_{0} \vert \leq 2 \gamma \sqrt{\mathcal{R}z_{1}\mathcal{R}z_{2}}.
\end{equation}
The condition (\ref{eq:scalars}) arises naturally in the von Neumann stability analysis of ADI schemes when a mixed derivative $u_{xy}$ is present.
It has been considered in \cite{IHM11,IHW07,IHW09} with $\gamma = 1$ and in \cite{IHM13, M14} for arbitrary $\gamma \in [0,1]$.

For the first term in \eqref{eq:scalarversions}, we obtain the following positive result under (\ref{eq:scalars}).
\begin{theorem}
\label{th:StabilityType}
Assume (\ref{eq:scalars}) and $0\le \gamma \le 1$. Then 
\[ 
\left\vert  1 + \tfrac{1}{2} \frac{z_{0}}{p} + (\tfrac{1}{2}-\theta)\frac{z_{1}+z_{2}}{p} \right\vert \le
\begin{cases} 
\tfrac{1}{2\theta} - \tfrac{3}{2} &\mbox{if}~~ 0 < \theta < \tfrac{1}{6}\,, \\ 
\tfrac{3}{2} & \mbox{if}~~ \tfrac{1}{6} \le \theta \le 1\,,\\
2 - \tfrac{1}{2\theta} & \mbox{if}~~ 1 < \theta\,.\\
\end{cases}
\]
\end{theorem}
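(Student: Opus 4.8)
The plan is to remove the unboundedness of the admissible region for $z_1,z_2$ by the M\"obius substitution $s_j=1/(1-\theta z_j)$ ($j=1,2$). Since $\mathcal{R}z_j\le 0$ and $\theta>0$, each $1-\theta z_j$ lies in the half-plane $\{\mathcal{R}w\ge 1\}$, whose image under $w\mapsto 1/w$ is the closed disk $D=\{s:|s-\tfrac12|\le\tfrac12\}$, equivalently $|s_j|^2\le\mathcal{R}s_j$. Writing $p=(1-\theta z_1)(1-\theta z_2)=1/(s_1 s_2)$ and $z_j=(s_j-1)/(\theta s_j)$, a brief calculation yields
\[
\frac{z_1+z_2}{p}=\frac1\theta\,(2s_1 s_2-s_1-s_2),\qquad \frac{z_0}{p}=s_1 s_2\,z_0 .
\]
Put $\sigma_j=s_j-\tfrac12$ (so $|\sigma_j|\le\tfrac12$) and $\mu=(\tfrac12-\theta)/\theta=\tfrac1{2\theta}-1$. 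Using $2s_1 s_2-s_1-s_2=2\sigma_1\sigma_2-\tfrac12$, the quantity to be estimated becomes
\[
E:=1+\tfrac12\tfrac{z_0}{p}+(\tfrac12-\theta)\tfrac{z_1+z_2}{p}=\Bigl(1-\tfrac{\mu}{2}\Bigr)+2\mu\,\sigma_1\sigma_2+\tfrac12\,s_1 s_2\,z_0 .
\]

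Next I would translate the constraint on $z_0$. From $z_j=(s_j-1)/(\theta s_j)$ one gets $\mathcal{R}z_j=-(\mathcal{R}s_j-|s_j|^2)/(\theta|s_j|^2)\le 0$, so by \eqref{eq:scalars}
\[
\bigl|\tfrac12 s_1 s_2 z_0\bigr|\le\tfrac12|s_1||s_2|\cdot 2\gamma\sqrt{\mathcal{R}z_1\,\mathcal{R}z_2}=\tfrac{\gamma}{\theta}\sqrt{(\mathcal{R}s_1-|s_1|^2)(\mathcal{R}s_2-|s_2|^2)} .
\]
Writing $r_j=|\sigma_j|\in[0,\tfrac12]$ and using the identity $\mathcal{R}s_j-|s_j|^2=\tfrac14-r_j^2$ together with the triangle inequality applied to the expression for $E$ above, I obtain
\[
|E|\le\Bigl|1-\tfrac{\mu}{2}\Bigr|+2|\mu|\,r_1 r_2+\tfrac{\gamma}{\theta}\sqrt{(\tfrac14-r_1^2)(\tfrac14-r_2^2)} .
\]

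It remains to bound the $(r_1,r_2)$-dependent part uniformly on $[0,\tfrac12]^2$. For nonnegative quantities one has $a\,\xi_1\xi_2+b\,\eta_1\eta_2\le\max(a,b)(\xi_1\xi_2+\eta_1\eta_2)$, and by the Cauchy--Schwarz inequality $r_1 r_2+\sqrt{(\tfrac14-r_1^2)(\tfrac14-r_2^2)}\le\sqrt{r_1^2+(\tfrac14-r_1^2)}\,\sqrt{r_2^2+(\tfrac14-r_2^2)}=\tfrac14$; hence $2|\mu|r_1 r_2+\tfrac{\gamma}{\theta}\sqrt{(\tfrac14-r_1^2)(\tfrac14-r_2^2)}\le\tfrac14\max(2|\mu|,\tfrac{\gamma}{\theta})=\max(\tfrac{|\mu|}{2},\tfrac{\gamma}{4\theta})\le\max(\tfrac{|\mu|}{2},\tfrac1{4\theta})$ since $\gamma\le 1$. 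Therefore
\[
|E|\le\Bigl|\tfrac32-\tfrac1{4\theta}\Bigr|+\max\Bigl(\bigl|\tfrac1{4\theta}-\tfrac12\bigr|,\tfrac1{4\theta}\Bigr),
\]
and a routine case distinction with breakpoints $\theta=\tfrac16,\tfrac12,1$ (governed by the signs of $\tfrac32-\tfrac1{4\theta}$ and $\tfrac12-\tfrac1{4\theta}$, and by the comparison of $\tfrac12-\tfrac1{4\theta}$ with $\tfrac1{4\theta}$) evaluates the right-hand side to $\tfrac1{2\theta}-\tfrac32$ for $0<\theta<\tfrac16$, to $\tfrac32$ for $\tfrac16\le\theta\le 1$, and to $2-\tfrac1{2\theta}$ for $\theta>1$, which is the assertion.

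The step I expect to be the crux is setting up the estimates so that no sharpness is lost: the substitution $s_j=1/(1-\theta z_j)$ is essential because it both confines $s_1,s_2$ to the bounded disk $D$ and turns $z_0/p$ and $(z_1+z_2)/p$ into low-degree polynomials in $s_1,s_2$; and one must keep the real constant $1-\tfrac{\mu}{2}$ grouped (not split as $1$ and $\tfrac{\mu}{2}$) and then combine the two remaining terms through the $\max$--plus--Cauchy--Schwarz bound, since a naive triangle inequality on all three summands of $E$ from the start over-estimates $|E|$ and does not reproduce the stated constants. Verifying the two displayed algebraic identities and carrying out the final case analysis is then straightforward.
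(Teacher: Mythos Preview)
Your argument is correct and arrives at exactly the same final bound as the paper,
\[
|E|\le\Bigl|\tfrac32-\tfrac1{4\theta}\Bigr|+\max\Bigl(\bigl|\tfrac1{4\theta}-\tfrac12\bigr|,\tfrac1{4\theta}\Bigr)
=\Bigl|\tfrac32-\tfrac1{4\theta}\Bigr|+\tfrac{1}{2\theta}\max\bigl\{\tfrac12,\,|\tfrac12-\theta|\bigr\},
\]
after which the case analysis is identical. The route differs only in packaging: the paper invokes \cite[Lemma~2.3]{IHW07}, which states that with $\alpha=z_0/p$ and $\beta=\tfrac{1}{2\theta}+(z_1+z_2)/p$ one has $|\alpha|+|\beta|\le\tfrac{1}{2\theta}$, and then writes $E=\bigl(\tfrac32-\tfrac1{4\theta}\bigr)+\tfrac12\alpha+(\tfrac12-\theta)\beta$ and bounds $\tfrac12|\alpha|+|\tfrac12-\theta|\,|\beta|$ by $\max\{\tfrac12,|\tfrac12-\theta|\}(|\alpha|+|\beta|)$. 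Your M\"obius substitution $s_j=1/(1-\theta z_j)$, $\sigma_j=s_j-\tfrac12$, reproduces this structure exactly (indeed $\beta=2\sigma_1\sigma_2/\theta$, $\alpha=s_1s_2 z_0$), and your Cauchy--Schwarz step $r_1r_2+\sqrt{(\tfrac14-r_1^2)(\tfrac14-r_2^2)}\le\tfrac14$ is precisely an in-line proof of that cited lemma. So the methods coincide; what your write-up buys is a fully self-contained argument that avoids the external reference.
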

\begin{proof}
By \cite[Lemma 2.3]{IHW07} it holds that
\[
p \not= 0 \quad {\rm and} \quad
\left\vert \alpha \right\vert + \left\vert \beta \right\vert \le \frac{1}{2\theta} \quad {\rm with} \quad
\alpha = \frac{z_{0}}{p}~,~ \beta = \frac{1}{2\theta} + \frac{z_{1}+z_{2}}{p}\,.
\]
Using this we obtain
\begin{eqnarray*}
\left\vert  1 + \tfrac{1}{2} \frac{z_{0}}{p} + (\tfrac{1}{2}-\theta)\frac{z_{1}+z_{2}}{p} \right\vert
&=&\left\vert 1 + \tfrac{1}{2} \alpha + (\tfrac{1}{2}-\theta)(\beta-\tfrac{1}{2\theta}) \right\vert\\\\
&=& \left\vert \tfrac{3}{2} - \tfrac{1}{4\theta} + \tfrac{1}{2} \alpha + (\tfrac{1}{2}-\theta) \beta \right\vert\\\\
&\le& \left\vert \tfrac{3}{2} - \tfrac{1}{4\theta}\right\vert + \tfrac{1}{2} \left\vert\alpha\right\vert + \left\vert\tfrac{1}{2}-\theta\right\vert \left\vert\beta \right\vert\\\\
&\le& \left\vert \tfrac{3}{2} - \tfrac{1}{4\theta}\right\vert + \tfrac{1}{2\theta}\cdot\max \{\tfrac{1}{2},\left\vert\tfrac{1}{2}-\theta\right\vert \}\,,
\end{eqnarray*}
which readily yields the result of the theorem.
\begin{flushright}
\mbox{\tiny $\blacksquare$}
\end{flushright}
\end{proof}
For bounding the second term in \eqref{eq:scalarversions}, we make use of the following elementary lemma, where $\R^{+} = [0,\infty)$. 
\begin{lemma}
\label{le:RealFunction}
Let $0 \leq \delta \leq 1$ and 
$$ 
f: \R^{+} \times \R^{+} \rightarrow \R : (x,y) \rightarrow \sqrt{1+ x^{2}}\sqrt{1+ y^{2}} - \delta(x + y). 
$$
Then
$$ 
\min\{ f(x,y)\, \vert\, (x,y) \in \R^{+} \times \R^{+} \} = 1 - \delta^{2}. 
$$
\end{lemma}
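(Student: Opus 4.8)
The plan is to prove two things: $f(x,y)\ge 1-\delta^{2}$ for all $(x,y)\in\R^{+}\times\R^{+}$, and $f(\delta,\delta)=1-\delta^{2}$. Since $\delta\ge 0$ the point $(\delta,\delta)$ lies in the domain, so together these show that the minimum is attained and equals $1-\delta^{2}$. The equality is a one-line computation, $f(\delta,\delta)=(1+\delta^{2})-2\delta^{2}=1-\delta^{2}$, so the real content is the global lower bound.

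To see where the bound is sharp I would first impose $\nabla f=0$. The stationarity equations $\tfrac{x}{\sqrt{1+x^{2}}}\sqrt{1+y^{2}}=\delta=\tfrac{y}{\sqrt{1+y^{2}}}\sqrt{1+x^{2}}$ reduce, on eliminating $\delta$, to $(x-y)(1-xy)=0$; hence $x=y=\delta$ is the unique interior critical point when $\delta<1$. This identifies $(\delta,\delta)$ as the expected minimizer and suggests seeking a direct algebraic proof of the bound centred there.

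The main step is the following. Since $\delta(x+y)+1-\delta^{2}\ge 1-\delta^{2}\ge 0$, the inequality $f(x,y)\ge 1-\delta^{2}$ is equivalent to $\sqrt{(1+x^{2})(1+y^{2})}\ge\delta(x+y)+1-\delta^{2}$, and as both sides are nonnegative this is in turn equivalent to the polynomial inequality $(1+x^{2})(1+y^{2})\ge(\delta(x+y)+1-\delta^{2})^{2}$. I would then verify, by direct expansion, the identity
\[
(1+x^{2})(1+y^{2})-\big(\delta(x+y)+1-\delta^{2}\big)^{2}=(xy-\delta^{2})^{2}+(1-\delta^{2})\big[(x-\delta)^{2}+(y-\delta)^{2}\big].
\]
Because $0\le\delta\le 1$, the right-hand side is a sum of nonnegative terms, hence $\ge 0$; this gives the polynomial inequality and therefore $f\ge 1-\delta^{2}$ throughout $\R^{+}\times\R^{+}$. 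Combined with $f(\delta,\delta)=1-\delta^{2}$, the lemma follows; the identity also shows that for $\delta<1$ the minimizer $(\delta,\delta)$ is unique, while for $\delta=1$ every $(x,y)$ with $xy=1$ is a minimizer.

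The hard part is guessing the decomposition on the right-hand side of the identity; once it is written down, its verification is mechanical. If one prefers to avoid that ansatz, an alternative is to minimize one variable at a time: for fixed $x\ge 0$ the map $y\mapsto f(x,y)$ is convex (a positive multiple of the convex function $\sqrt{1+y^{2}}$ minus an affine term), and a short calculation gives $\min_{y\ge 0}f(x,y)=\sqrt{1+x^{2}-\delta^{2}}-\delta x$ (using $1+x^{2}-\delta^{2}\ge 1-\delta^{2}\ge 0$); the map $x\mapsto\sqrt{1+x^{2}-\delta^{2}}-\delta x$ is again convex with minimum $1-\delta^{2}$ at $x=\delta$. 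Only the degenerate cases $y=0$ and $\delta=1$ then require a separate remark.
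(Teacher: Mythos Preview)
Your proof is correct and takes a genuinely different route from the paper's. The paper argues by a standard calculus analysis: it locates the unique interior stationary point $(\delta,\delta)$, verifies the second-order sufficient conditions $f_{xx}>0$, $f_{yy}>0$, $f_{xx}f_{yy}-f_{xy}^{2}>0$ there, and then separately rules out the boundary by showing $f\to\infty$ as $x\to\infty$ or $y\to\infty$ and that on $\{y=0\}$ (and by symmetry $\{x=0\}$) the one-variable minimum $\sqrt{1-\delta^{2}}$ strictly exceeds $1-\delta^{2}$; the endpoints $\delta=0$ and $\delta=1$ are treated by separate (trivial, respectively continuity) arguments. Your approach replaces all of this with a single sum-of-squares identity
\[
(1+x^{2})(1+y^{2})-\bigl(\delta(x+y)+1-\delta^{2}\bigr)^{2}=(xy-\delta^{2})^{2}+(1-\delta^{2})\bigl[(x-\delta)^{2}+(y-\delta)^{2}\bigr],
\]
which, once written down, gives the global lower bound for every $\delta\in[0,1]$ at once and simultaneously identifies the equality cases (unique minimizer $(\delta,\delta)$ for $\delta<1$; the hyperbola $xy=1$ for $\delta=1$). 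Your argument is shorter and more informative; the trade-off is that the decomposition has to be guessed, whereas the paper's calculus approach is discoverable step by step. Your alternative one-variable-at-a-time convexity argument is also valid and closer in spirit to the paper's boundary computation.
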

The proof of Lemma \ref{le:RealFunction} is given in Appendix \ref{ap:prooflemma}.
\begin{theorem}
\label{th:begrensdeinverse}
Assume (\ref{eq:scalars}) and $0\le \gamma \le 1$. Then
$$
\vert p + \tfrac{1}{2}z_{0}+ (\tfrac{1}{2} - \theta )(z_{1} + z_{2}) \vert \geq 
\begin{cases} 
(\theta - \tfrac{1}{4})/\theta^{2} & \mbox{if}~~ \tfrac{1}{4} \leq \theta < \tfrac{1}{2}, \ 0 \leq \gamma < 2\theta \,, \\
-3(\theta - \tfrac{1+\gamma}{6})(\theta -\tfrac{1+\gamma}{2})/\theta^2 &\mbox{if}~~ \tfrac{1}{4} \leq \theta < \tfrac{1}{2}, \ 2\theta \leq \gamma \leq \min\{6\theta - 1,1\} \,, \\ 
1 & \mbox{if}~~ \tfrac{1}{2} \leq \theta\,.\\
\end{cases} 
$$
\end{theorem}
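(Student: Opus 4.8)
The plan is to rewrite the scalar quantity in a product form for which the minimum over the free imaginary parts can be found by elementary means, and then to minimise what remains over the real parts. Put $s_j := -\mathcal{R}z_j \ge 0$, so that $\mathcal{R}z_1\,\mathcal{R}z_2 = s_1 s_2$, and observe that $\theta \ge \tfrac14$ yields $\sigma := 2 - \tfrac1{2\theta} \ge 0$ and $\tfrac1{2\theta} = 2-\sigma$. First I would peel off $z_0$ using the triangle inequality and \eqref{eq:scalars},
\[
\bigl| p + \tfrac12 z_0 + (\tfrac12-\theta)(z_1+z_2) \bigr| \;\ge\; |H| - \gamma\sqrt{s_1 s_2}, \qquad H := p + (\tfrac12-\theta)(z_1+z_2).
\]
Writing $\zeta_j := \theta z_j = -a_j + i b_j$ with $a_j = \theta s_j \ge 0$ and expanding $p = (1-\zeta_1)(1-\zeta_2)$, a short computation gives
\[
\overline{H} \;=\; W_1 W_2 + \nu, \qquad W_j := (a_j+\sigma) + i b_j,\quad \nu := 1-\sigma^2 ,
\]
so that $|H| = |W_1 W_2 + \nu|$ with $\mathcal{R}W_j = a_j + \sigma \ge \sigma$, while $\gamma\sqrt{s_1 s_2} = \tfrac{\gamma}{\theta}\sqrt{(\mathcal{R}W_1-\sigma)(\mathcal{R}W_2-\sigma)}$.

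Next I would minimise $|W_1 W_2 + \nu|$ over the imaginary parts $b_1,b_2$ at fixed real parts $\alpha_j := \mathcal{R}W_j \ge \sigma$. Writing $W_j = \alpha_j(1+it_j)$ and using $(1+t_1^2)(1+t_2^2) = (1-t_1t_2)^2 + (t_1+t_2)^2$ gives $|W_1W_2+\nu|^2 = \alpha_1^2\alpha_2^2(1+t_1^2)(1+t_2^2) + 2\nu\alpha_1\alpha_2(1-t_1t_2) + \nu^2$, and an elementary one-variable minimisation yields
\[
\min_{b_1,b_2}\bigl| W_1 W_2 + \nu \bigr| \;=\;
\begin{cases}
\alpha_1\alpha_2 + \nu & \text{if}\ \ \alpha_1\alpha_2 \ge \nu,\\
2\sqrt{\nu\,\alpha_1\alpha_2} & \text{if}\ \ 0\le\alpha_1\alpha_2 \le \nu ,
\end{cases}
\]
the second branch being possible only when $\nu>0$, i.e.\ $\theta<\tfrac12$. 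This minimum depends on $(\alpha_1,\alpha_2)$ only through $\alpha_1\alpha_2$ and is non-decreasing in it; since, for a fixed value of $\sqrt{(\alpha_1-\sigma)(\alpha_2-\sigma)}$, the product $\alpha_1\alpha_2$ is smallest when $\alpha_1=\alpha_2$ (AM--GM), the whole lower bound for $|p + \tfrac12 z_0 + (\tfrac12-\theta)(z_1+z_2)|$ is minimised on the diagonal $\alpha_1=\alpha_2=\sigma+a$, $a\ge0$. Denoting by $\psi(a)$ the resulting lower bound there, $\psi$ is linear on $\{(\sigma+a)^2\le\nu\}$ and convex quadratic on $\{(\sigma+a)^2\ge\nu\}$, and the elementary Lemma \ref{le:RealFunction} (applied with $\delta$ expressed through $\gamma$ and $\theta$) is the convenient device for bounding $\min_{a\ge0}\psi(a)$ from below.

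Using $\tfrac1{2\theta}=2-\sigma$ and $\sigma^2+\nu=1$ one checks that the convex piece of $\psi$ attains its minimum $1-(a^{*})^{2}$ at $a^{*}=\gamma(2-\sigma)-\sigma$. For $\theta\ge\tfrac12$ one is always in the first branch and $\psi(a)=(\sigma+a)^2+\nu-\tfrac{\gamma}{\theta}a=1+a^2+(2\sigma-\tfrac{\gamma}{\theta})a\ge1$, since $2\sigma-\tfrac{\gamma}{\theta}=4-\tfrac{1+\gamma}{\theta}\ge0$ for $\gamma\le1$. For $\tfrac14\le\theta<\tfrac12$ and $\gamma<2\theta$ (so $\gamma(2-\sigma)<1$, hence $a^{*}<1-\sigma$), a short check of the piecewise function $\psi$ — minding the sign of $a^{*}$ and the slope $2\sqrt{\nu}-\tfrac{\gamma}{\theta}$ of its linear piece — gives $\min_{a\ge0}\psi(a)\ge\min\{\,1,\ 2\sigma\sqrt{\nu},\ 1-(a^{*})^{2}\,\}\ge\sigma(2-\sigma)=(\theta-\tfrac14)/\theta^{2}$, where one uses $2\sqrt{1-\sigma^{2}}\ge2-\sigma$ (valid for $\sigma\le\tfrac45$, and $\sigma<\tfrac1{\sqrt2}$ whenever the $2\sigma\sqrt{\nu}$ term is relevant) together with $a^{*}<1-\sigma$. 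For $\tfrac14\le\theta<\tfrac12$ and $2\theta\le\gamma\le\min\{6\theta-1,1\}$ one has $\gamma(2-\sigma)\ge1$, so $1-\sigma\le a^{*}$, and $\gamma\le6\theta-1$ gives $a^{*}\le1$; the diagonal minimum is then $\psi(a^{*})=1-(a^{*})^{2}\ge0$, and substituting $\theta=1/(2(2-\sigma))$ identifies this with $-3(\theta-\tfrac{1+\gamma}{6})(\theta-\tfrac{1+\gamma}{2})/\theta^{2}$. The main obstacle is exactly this final case analysis — keeping straight which branch of $\min_b|W_1W_2+\nu|$ is active, the sign of $a^{*}$ and the slope of the linear piece of $\psi$, and, in the regime $\gamma<2\theta$, being content with the clean but non-sharp bound $\sigma(2-\sigma)$ in place of the exact infimum. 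The algebraic reductions leading up to it are routine.
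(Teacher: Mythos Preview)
Your argument is correct and constitutes a genuinely different route from the paper's for the regime $\tfrac14\le\theta<\tfrac12$.  Both proofs start by peeling off $\tfrac12 z_0$ via the triangle inequality.  For $\theta\ge\tfrac12$ the paper also uses the product form (with $\digamma/\theta$ playing the role of your $\sigma$), but then applies the crude bound $|W_j|\ge\mathcal R W_j$ together with the reverse triangle inequality $|W_1W_2+\nu|\ge|W_1||W_2|-|\nu|$; you instead compute the exact minimum of $|W_1W_2+\nu|$ over the imaginary parts, which of course gives the same bound $1$.  For $\tfrac14\le\theta<\tfrac12$ the approaches diverge: the paper abandons the product form and instead bounds $|p|$ from below by a Cauchy--Schwarz inequality,
\[
|p|\ \ge\ 2\theta\sqrt{\mathcal R z_1\,\mathcal R z_2}+\sqrt{1+\theta^2|z_1|^2}\,\sqrt{1+\theta^2|z_2|^2},
\]
and then invokes the two--variable Lemma~\ref{le:RealFunction} directly with $x=\theta|z_1|$, $y=\theta|z_2|$ and $\delta=(\tfrac12-\theta)/\theta$ or $\delta=(\tfrac{1+\gamma}{2}-2\theta)/\theta$.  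You keep the product form throughout, compute the exact minimum over the free imaginary parts (yielding a piecewise function of $\alpha_1\alpha_2$), reduce to the diagonal by an AM--GM argument, and are left with a one--variable convex/linear minimisation in $a$; Lemma~\ref{le:RealFunction} is not actually used in your case analysis, so your reference to it is a little misleading --- your direct study of $\psi$ replaces it.  The trade--off is that your approach is more unified across the three $\theta$--ranges and yields the minima with essentially the same algebra, while the paper's Cauchy--Schwarz step avoids the piecewise definition of $\psi$ and feeds the problem straight into the ready--made two--variable lemma.
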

\begin{proof}
First, consider the case $\theta \geq \tfrac{1}{2}$ and put $\digamma = 2\theta - \tfrac{1}{2} $. Then,
\begin{eqnarray*}
\vert p+ \tfrac{1}{2}z_{0} + (\tfrac{1}{2} - \theta )(z_{1} + z_{2}) \vert &\geq &   \vert p + (\tfrac{1}{2} - \theta )(z_{1} + z_{2}) \vert - \sqrt{\mathcal{R}z_{1}\mathcal{R}z_{2}} \\
& = & \vert (\tfrac{\digamma}{\theta} - \theta z_{1} ) (\tfrac{\digamma}{\theta} - \theta z_{2}) + 1 - \tfrac{\digamma^{2}}{\theta^{2}} \vert - \sqrt{\mathcal{R}z_{1}\mathcal{R}z_{2}} \\
& \geq & \vert \tfrac{\digamma}{\theta} - \theta z_{1} \vert \vert \tfrac{\digamma}{\theta} - \theta z_{2} \vert - \vert 1 - \tfrac{\digamma^{2}}{\theta^{2}} \vert - \sqrt{\mathcal{R}z_{1}\mathcal{R}z_{2}}.
\end{eqnarray*}
Now, since 
$$\theta^{2} - \digamma^{2} = -3\theta^{2} +2\theta - \tfrac{1}{4} = -3(\theta - \tfrac{1}{6})(\theta - \tfrac{1}{2}),$$ it holds that $1 - \tfrac{\digamma^{2}}{\theta^{2}} $ is negative.
Further, since $\mathcal{R}(\tfrac{\digamma}{\theta} - \theta z_{j}) \geq 0$,
we have that
$$ \vert \tfrac{\digamma}{\theta} - \theta z_{j} \vert \geq \tfrac{\digamma}{\theta} - \theta \mathcal{R}z_{j} \quad \mbox{for} \ j=1,2.$$
As a consequence
\begin{eqnarray*}
\vert p+ \tfrac{1}{2}z_{0} + (\tfrac{1}{2} - \theta )(z_{1} + z_{2}) \vert &\geq & \tfrac{\digamma^{2}}{\theta^{2}} - \digamma(\mathcal{R}z_{1}+\mathcal{R}z_{2}) + \theta^{2}\mathcal{R}z_{1}\mathcal{R}z_{2} +1 - \tfrac{\digamma^{2}}{\theta^{2}} - \sqrt{\mathcal{R}z_{1}\mathcal{R}z_{2}} \\
& = & 1 + \theta^{2}\mathcal{R}z_{1}\mathcal{R}z_{2} + \digamma ( \sqrt{-\mathcal{R}z_{1}} - \sqrt{-\mathcal{R}z_{2}}\, )^{2} + (2\digamma-1)\sqrt{\mathcal{R}z_{1}\mathcal{R}z_{2}} \\
& \geq & 1 + 2(2\theta -1)\sqrt{\mathcal{R}z_{1}\mathcal{R}z_{2}} \\
&\geq & 1,
\end{eqnarray*}
which completes the proof for $\theta \geq \tfrac{1}{2}$. 

Next consider the case $\tfrac{1}{4} \leq \theta < \tfrac{1}{2}$ and $2\theta \leq \gamma \leq \min\{6\theta-1,1\}$.
Define vectors 
$$ 
v_j =
\left[ \begin{array}{c}
\sqrt{-2\theta \mathcal{R}z_{j}} \\\\
\sqrt{1+\theta^{2} \vert z_{j} \vert^{2}} 
\end{array} \right] \quad \mbox{  for} \ j=1,2.
$$
By the Cauchy--Schwarz inequality, we have
\begin{equation}
\label{eq:CSinequality}
\vert p \vert = 
\sqrt{v_1^{\rm T} v_1} \sqrt{v_2^{\rm T} v_2} \ge v_1^{\rm T} v_2 =
2\theta \sqrt{\mathcal{R}z_{1}\mathcal{R}z_{2}} + \sqrt{1+\theta^{2} \vert z_{1} \vert^{2}}\sqrt{1+\theta^{2} \vert z_{2} \vert^{2}}. 
\end{equation}
Also, 
\begin{eqnarray}
\vert z_{1} + z_{2} \vert &=& \sqrt{(\mathcal{R}z_{1} + \mathcal{R}z_{2})^{2} + (\mathcal{I}z_{1} + \mathcal{I}z_{2})^{2}} \nonumber \\
& =& \sqrt{(\mathcal{R}z_{1} - \mathcal{R}z_{2})^{2} + 4\mathcal{R}z_{1}\mathcal{R}z_{2} + (\mathcal{I}z_{1} + \mathcal{I}z_{2})^{2}} \nonumber \\
& \geq & 2 \sqrt{\mathcal{R}z_{1}\mathcal{R}z_{2}} \nonumber \\
& \geq & \frac{1}{\gamma}\vert z_{0}\vert.
\label{eq:z0inequality}
\end{eqnarray}
Using (\ref{eq:CSinequality}), (\ref{eq:z0inequality}) and $\tfrac{1}{2} - \tfrac{\theta}{\gamma}\ge 0$
we find 
\begin{eqnarray*}
\vert p+ \tfrac{1}{2}z_{0} + (\tfrac{1}{2} - \theta )(z_{1} + z_{2}) \vert 
& \geq & \vert p \vert - \tfrac{1}{2} \vert z_{0} \vert - (\tfrac{1}{2} - \theta )\vert z_{1} + z_{2} \vert\\
& \geq & \vert p \vert - 2\theta\sqrt{\mathcal{R}z_{1}\mathcal{R}z_{2}} - (\tfrac{1}{2} - \tfrac{\theta}{\gamma}) \vert z_{0} \vert - (\tfrac{1}{2} - \theta) \vert z_{1} + z_{2} \vert  \\
& \geq & \sqrt{1+\theta^{2} \vert z_{1} \vert^{2}}\sqrt{1+\theta^{2} \vert z_{2} \vert^{2}} - (\tfrac{1+\gamma}{2} - 2\theta) \vert z_{1} + z_{2} \vert \\
& \geq & \sqrt{1+\theta^{2} \vert z_{1} \vert^{2}}\sqrt{1+\theta^{2} \vert z_{2} \vert^{2}} - (\tfrac{1+\gamma}{2} - 2\theta) (\vert z_{1} \vert + \vert z_{2} \vert).
\end{eqnarray*}
Define 
$$
\delta = (\tfrac{1+\gamma}{2} - 2\theta)/ \theta.
$$ 
It is easily verified that, in the case under consideration,  $0<\delta \leq 1$ and application of Lemma \ref{le:RealFunction} yields
\begin{eqnarray*}
\vert p+ \tfrac{1}{2}z_{0} + (\tfrac{1}{2} - \theta )(z_{1} + z_{2}) \vert & \geq & 1 - (\tfrac{1+\gamma}{2} - 2\theta)^{2}/\theta^{2} \\
& = & -3(\theta - \tfrac{1+\gamma}{6})(\theta -\tfrac{1+\gamma}{2})/\theta^2.
\end{eqnarray*}
Finally consider the case $\tfrac{1}{4} \leq \theta < \tfrac{1}{2}$ and $\gamma < 2\theta$. 
Analogously as above one finds
\begin{eqnarray*}
\vert p+ \tfrac{1}{2}z_{0} + (\tfrac{1}{2} - \theta )(z_{1} + z_{2}) \vert 
& \geq &  \vert p \vert - 2\theta\sqrt{\mathcal{R}z_{1}\mathcal{R}z_{2}} - (\tfrac{1}{2} - \tfrac{\theta}{\gamma}) \vert z_{0} \vert - (\tfrac{1}{2} - \theta) \vert z_{1} + z_{2} \vert  \\
& \geq & \sqrt{1+\theta^{2} \vert z_{1} \vert^{2}}\sqrt{1+\theta^{2} \vert z_{2} \vert^{2}} - (\tfrac{1}{2} - \theta) \vert z_{1} + z_{2} \vert  \\
& \geq & \sqrt{1+\theta^{2} \vert z_{1} \vert^{2}}\sqrt{1+\theta^{2} \vert z_{2} \vert^{2}} - (\tfrac{1}{2} - \theta) (\vert z_{1} \vert + \vert z_{2} \vert).
\end{eqnarray*}
Applying Lemma \ref{le:RealFunction} with $\delta = (\tfrac{1}{2} - \theta)/\theta$, it then follows that
\begin{eqnarray*}
\vert p+ \tfrac{1}{2}z_{0} + (\tfrac{1}{2} - \theta )(z_{1} + z_{2}) \vert & \geq & 1 - (\tfrac{1}{2} - \theta)^{2}/\theta^{2} \\
& = & (\theta - \tfrac{1}{4})/\theta^{2},
\end{eqnarray*}
and this completes the proof.
\begin{flushright}
\mbox{\tiny $\blacksquare$}
\end{flushright} 
\end{proof}
Theorem \ref{th:begrensdeinverse} directly implies the positive result that the second term in \eqref{eq:scalarversions} is also bounded from above whenever 
$\{ \tfrac{1}{4} < \theta \leq \tfrac{1}{3}$ and  $0 \leq \gamma < 6\theta - 1 \}$ or $\{\theta > \tfrac{1}{3} \}$.

\setcounter{equation}{0}
\setcounter{theorem}{0}
\section{Numerical experiments}
We present numerical experiments in the case of the 2D convection-diffusion equation (\ref{eq:ModelEquation}) for $(x,y) \in \Omega = (0,1) \times (0,1)$ and $0 \leq t \leq 2$ with parameters
\begin{equation}
\label{eq:ParameterSet}
 d_{11} = d, \ d_{12} = -2 \gamma d,\ d_{22} = 4d, \ c_{1} = -2, \ c_{2} = -3 \quad {\rm and}\quad d=0.025,\ \gamma = 0.7. 
\end{equation}
The requirement (\ref{eq:Coefficients}) is fulfilled for this choice of parameters.
We consider the initial condition
\begin{equation*}
u(x,y,0) = e^{-4(\sin^{2}\! \pi x + \cos^{2}\! \pi y)} \qquad \mbox{for}~~ (x,y) \in \Omega,
\end{equation*}
and Dirichlet boundary condition 
\begin{equation*}
~~~~~~~~~~~u(x,y,t) = e^{-rt}u(x,y,0) \qquad \mbox{for}~~ (x,y) \in \partial \Omega, \ 0 < t \leq 2,
\end{equation*}
with $r = 0.05$. 
Semidiscretization of the initial-boundary value problem is performed using standard second-order central finite difference schemes on a rectangular grid in $\Omega$ with spatial mesh widths $\Delta x = 1/(m_{1}+1)$ and $\Delta y = 1/(m_{2}+1)$. 
In order to avoid spurious oscillations, the convection terms $u_x$ and $u_y$ are discretized by second-order backward finite differencing near $x=1$ and $y=1$, respectively.
The semidiscretization leads to an initial value problem (\ref{eq:ODE}) with $m=m_1m_2$ and $F(t,v) = Av + e^{-rt} g$ with given $m\times m$-matrix $A$ and $m$-vector $g$.
Figure \ref{fig:InitialEnd} shows the semidiscrete solutions $U(0)$ and $U(2)$ on the grid in $\Omega$ if $m_1=m_2=50$.

We employ the splitting (\ref{eq:Decomposition}) of the function $F$ with $k=2$ and consider application of the MCS scheme with four interesting parameter values, namely $\theta = \tfrac{1}{4}, \tfrac{1}{3}, \tfrac{1}{2}, 1$. 
Recall that for $\theta = \tfrac{1}{2}$ one recovers the CS scheme \cite{CS88}.
Stability of the MCS scheme pertinent to 2D convection-diffusion equations with mixed derivative term has been analyzed in \cite{IHM11,M14}.
Applying the results from these references to the situation at hand, we expect unconditional stability whenever $\theta \ge \tfrac{1}{3}$ and a lack thereof when $\theta = \tfrac{1}{4}$.

Figure \ref{fig:Exp} displays for $m_{1}=m_{2} \in \{ 50, 100, 150, 200 \}$ the norms of the global discretization errors at $t=2$ as a function of $N$,
\begin{equation*}
e(N,m_{1},m_{2}) = \Vert U(2) - U_{2N} \Vert_{2},
\end{equation*}
where $N \Delta t = 1$ and $\| \cdot \|_2$ denotes the scaled Euclidean vector norm from Section~\ref{intro}.
Here we applied the HV scheme \cite{IHW07,Hun02,V99} with $\theta = \tfrac{1}{2} + \tfrac{1}{6}\sqrt{3}$ and $N = 10^4$ to obtain a reference solution~$U(2)$. 

For $\theta = \tfrac{1}{3}, \tfrac{1}{2}, 1$ the results of Figure \ref{fig:Exp} clearly reveal a second-order convergence behaviour in $\Delta t$, uniformly in the spatial mesh width. 
This positive conclusion is in line with our theory of Section~\ref{sec:results}.
For $\theta = \tfrac{1}{4}$, a second-order convergence behaviour uniformly in the spatial mesh width is clearly absent.
This conclusion also agrees with the theory of Section~\ref{sec:results}.
The observed strong increase in the global discretization errors as the spatial mesh width decreases corresponds to a lack of unconditional stability when $\theta = \tfrac{1}{4}$ (cf.~above).

We have performed numerical experiments also for other convection-diffusion parameter sets than (\ref{eq:ParameterSet}) which satisfy the condition (\ref{eq:Coefficients}) as well as for the celebrated two-dimensional Heston model \cite{H93} from financial mathematics. 
Semidiscretization of the latter model was performed as described in \cite{IHF10} on a non-uniform spatial grid, also leading to initial value problems of type (\ref{eq:ODE}) with $F(t,v) = Av + g(t)$.
In all of the experiments, we found the obtained conclusions concerning the temporal convergence behaviour of the MCS scheme to be in line with the theory of Section~\ref{sec:results}.

\begin{figure}
\begin{center}
\includegraphics[scale=0.5]{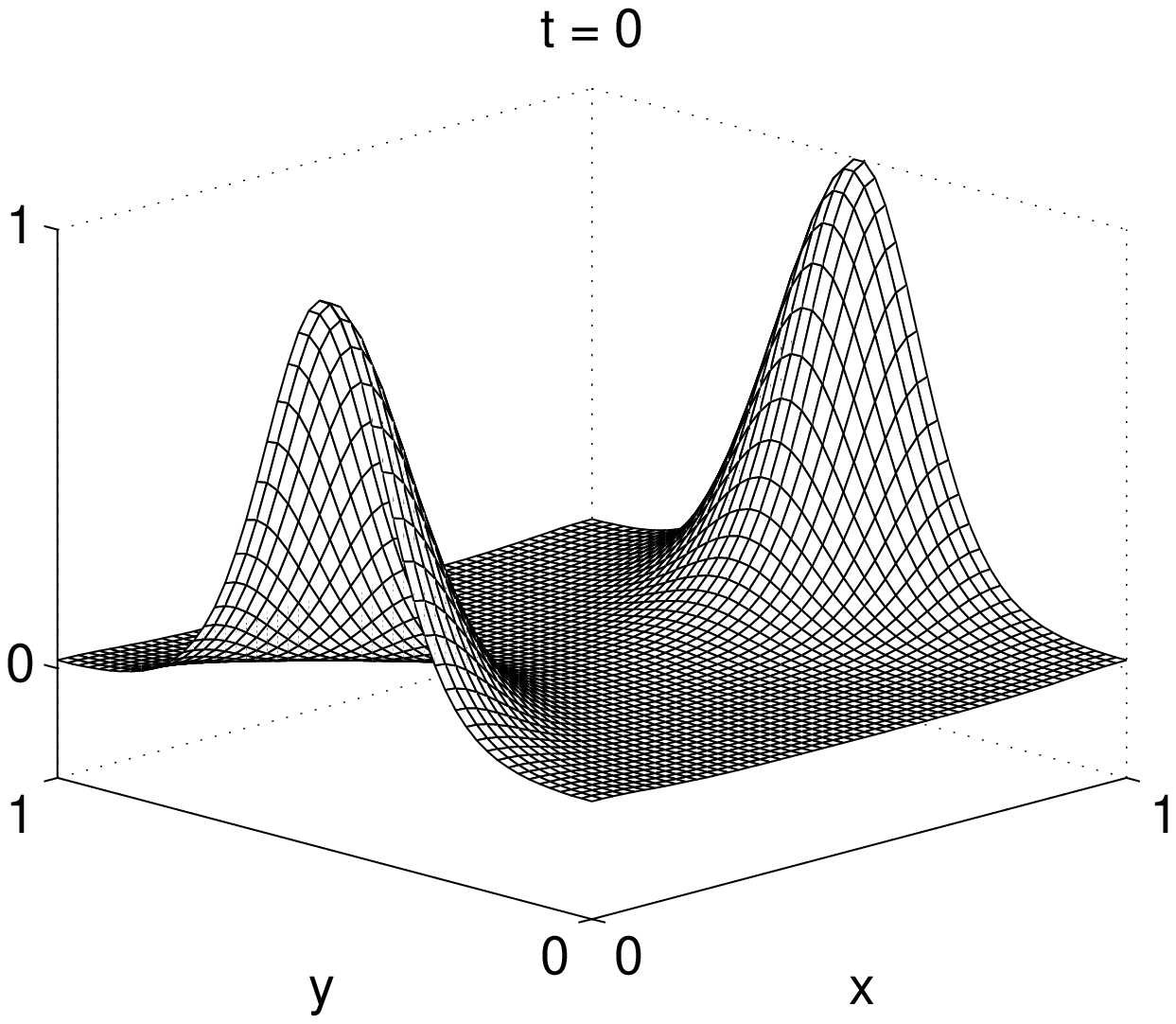}
\includegraphics[scale=0.5]{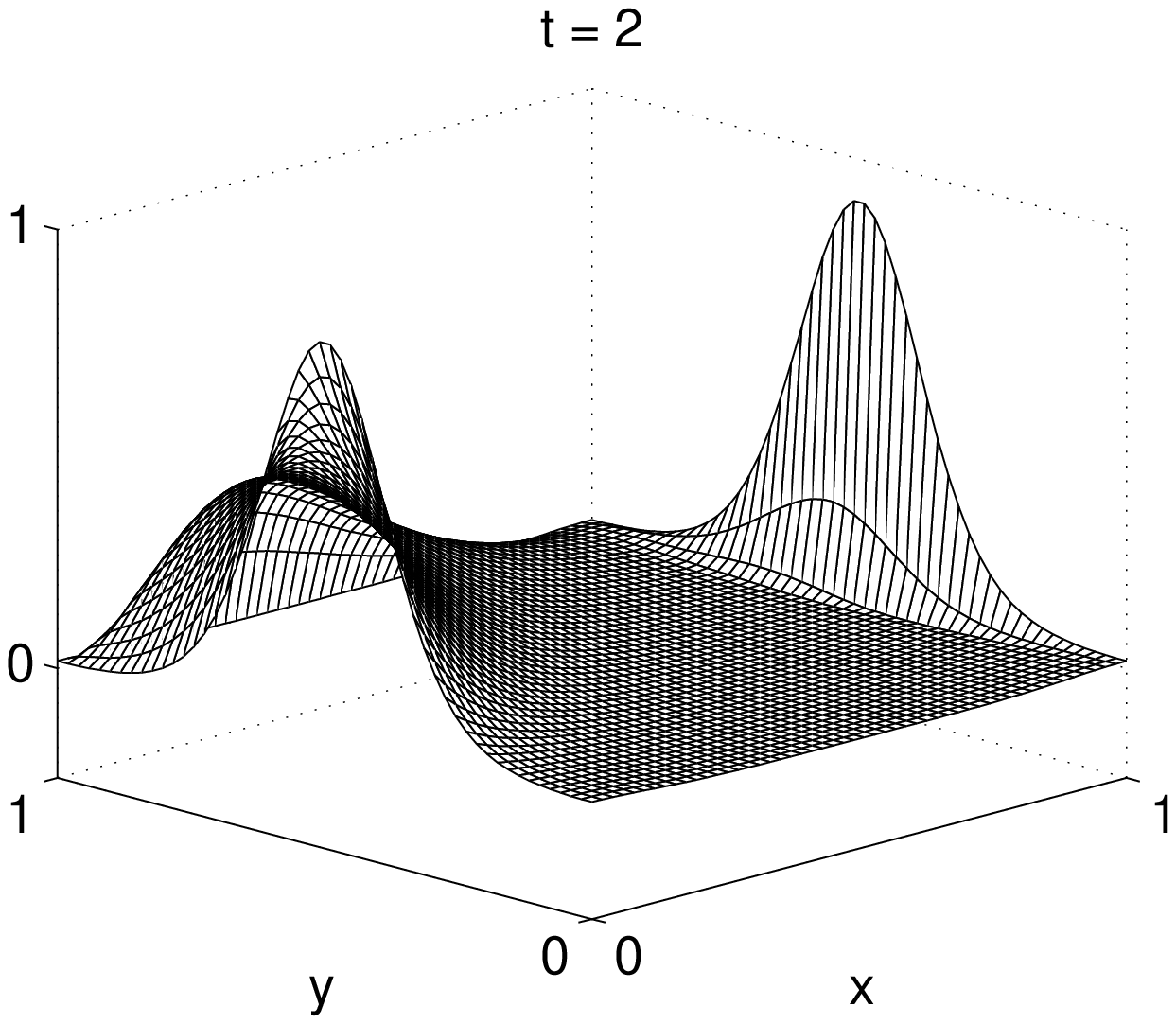}
\caption{Semidiscrete solutions $U(t)$ on $\Omega$ for $t=0, 2$ if $m_1=m_2=50$.}
\label{fig:InitialEnd}
\end{center}
\end{figure}

\begin{figure}
\begin{center}
\includegraphics[scale=0.5]{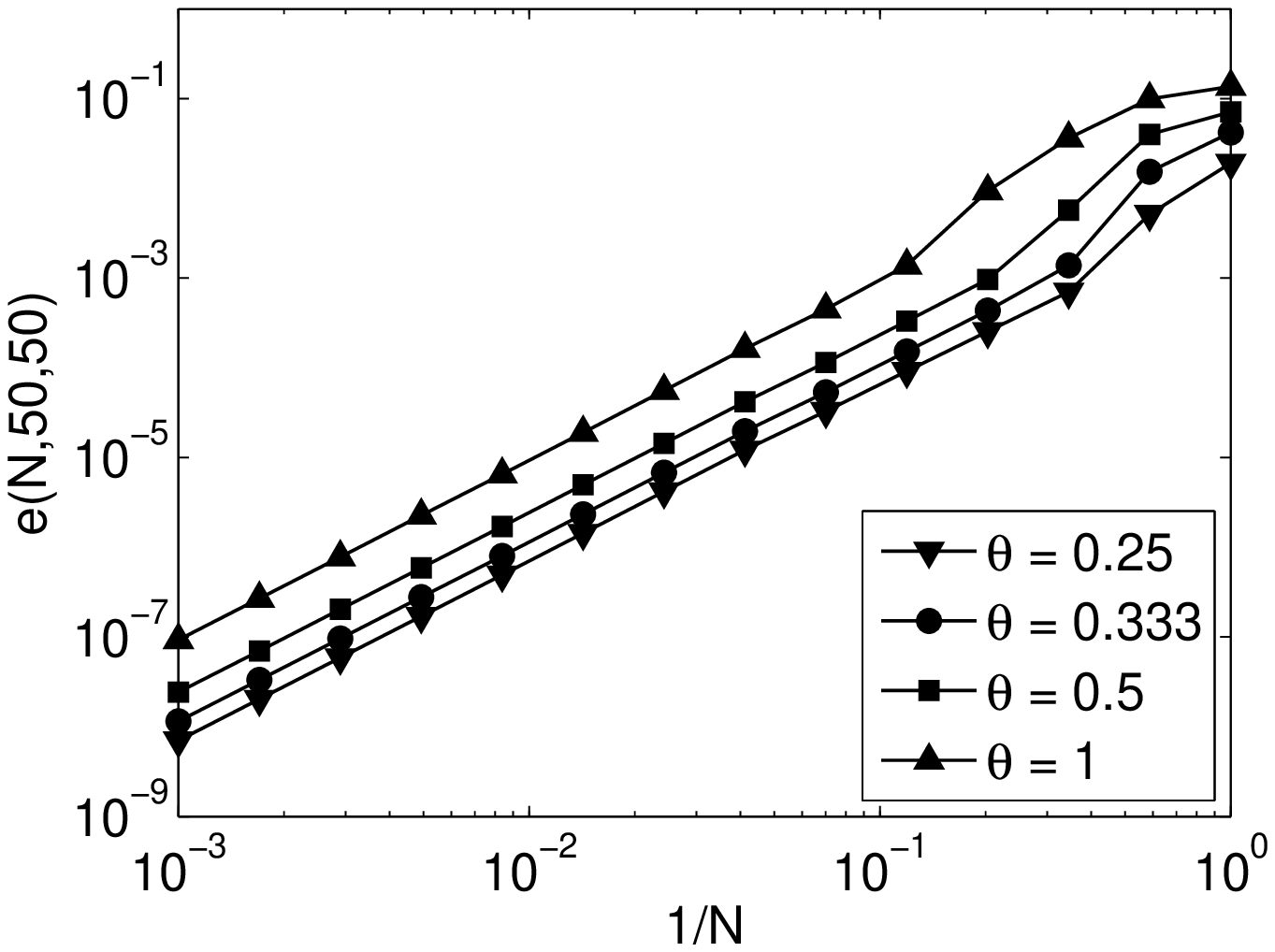}
\includegraphics[scale=0.5]{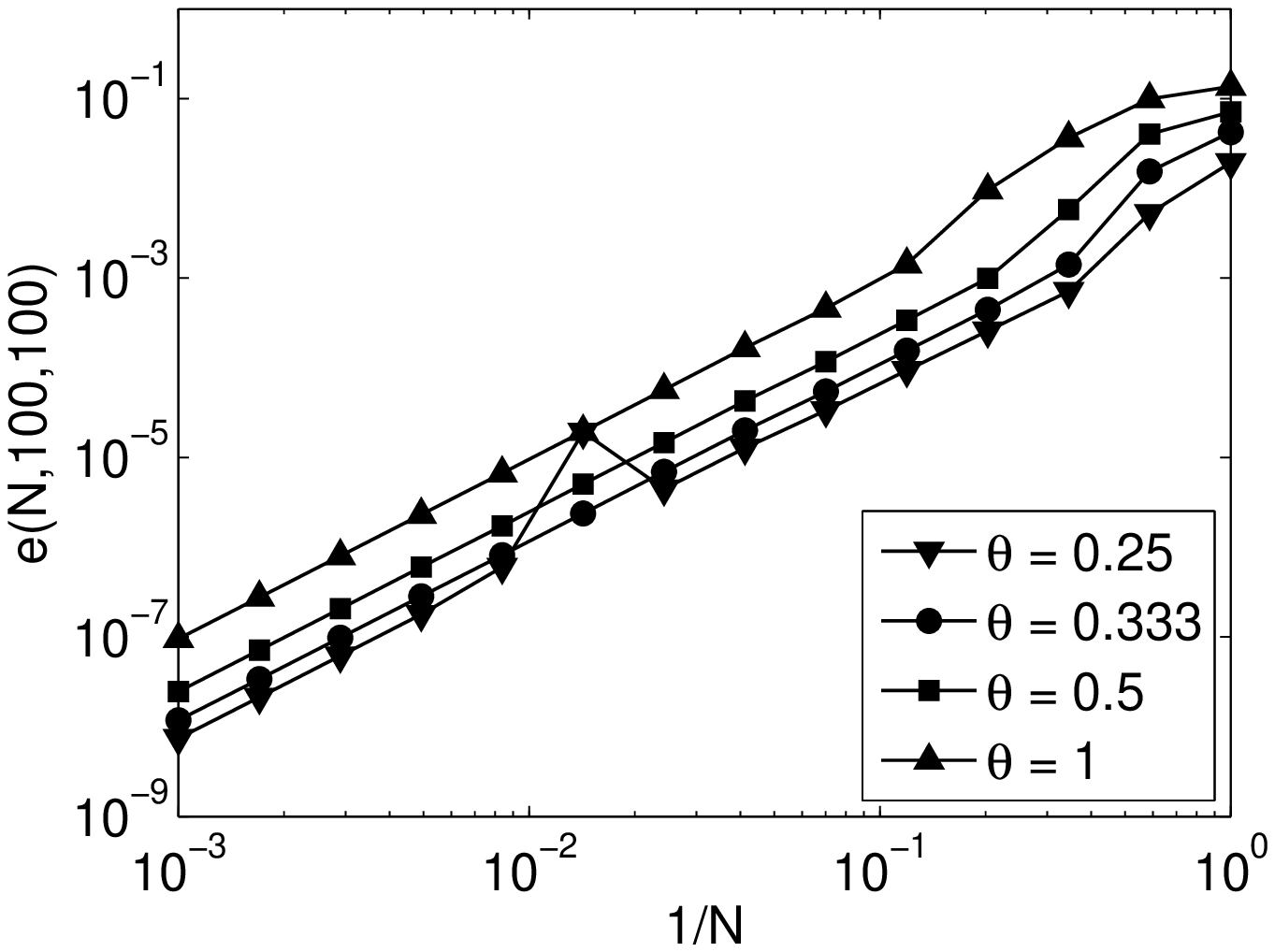}
\includegraphics[scale=0.5]{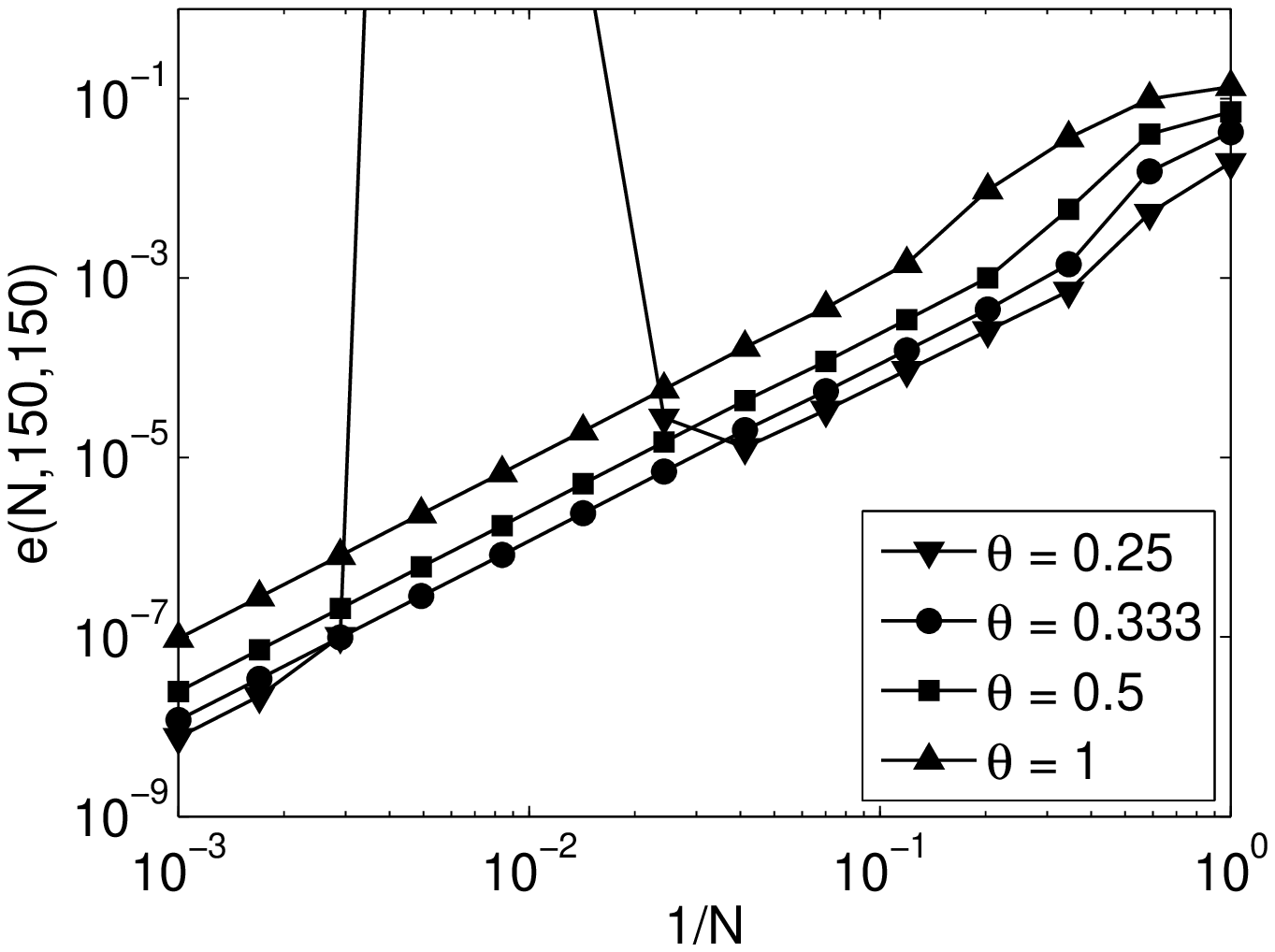}
\includegraphics[scale=0.5]{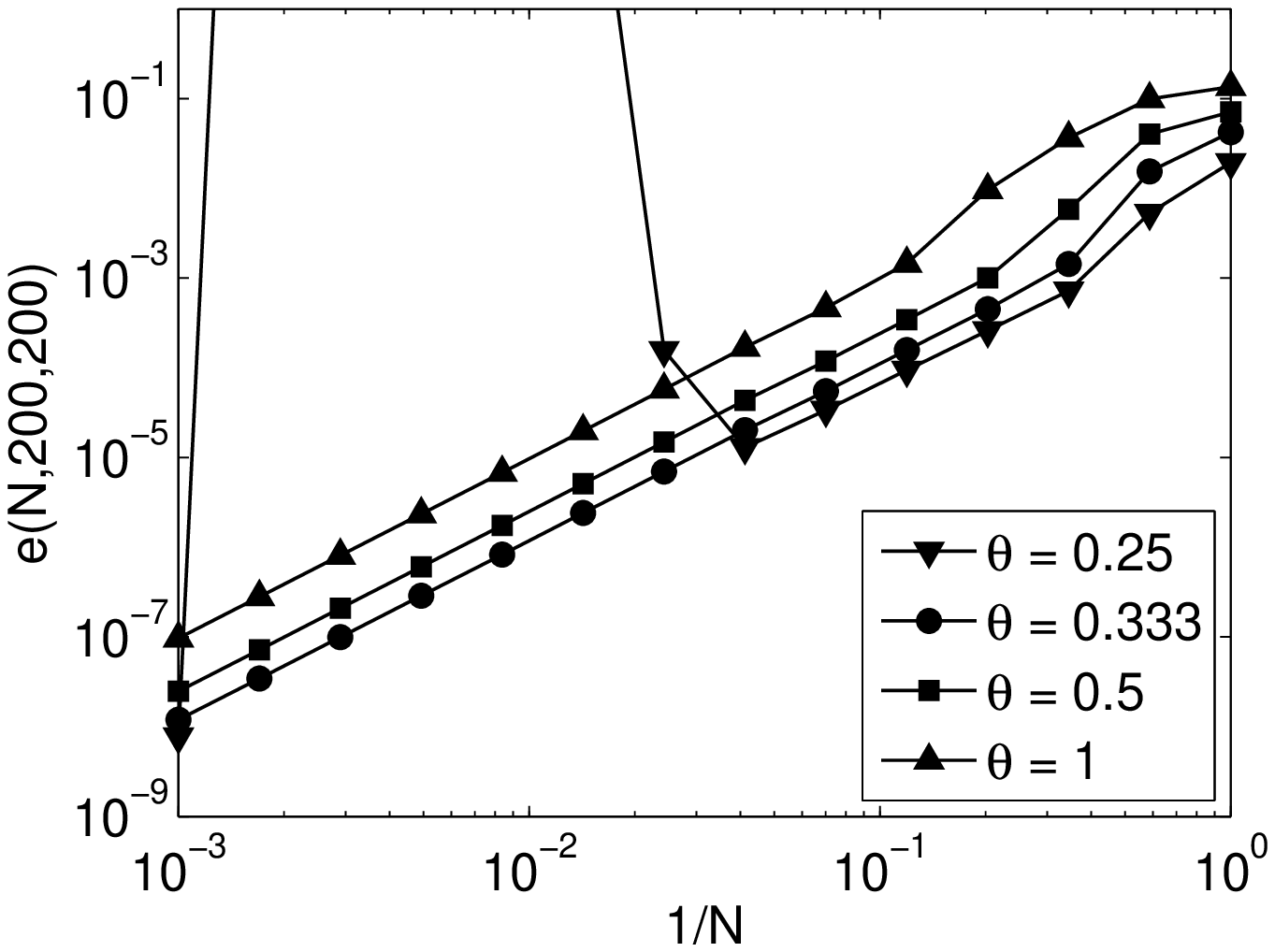}
\caption{Global discretization errors $e(N,m_{1},m_{2})$ versus $1/N$ for $m_{1}=m_{2}=50$ (top left), $m_{1}=m_{2}=100$ (top right), $m_{1}=m_{2}=150$ (bottom left) and $m_{1}=m_{2}=200$ (bottom right) with $\theta = \tfrac{1}{4}, \tfrac{1}{3}, \tfrac{1}{2}, 1$.}
\label{fig:Exp}
\end{center}
\end{figure}

\newpage
\setcounter{equation}{0}
\setcounter{theorem}{0}
\section{Conclusions}\label{concl}
We have proved a useful second-order convergence result for the MCS scheme in the application to two-dimensional time-dependent convection-diffusion equations with mixed derivative term.
Here the obtained bound on the global temporal discretization errors has the important property that it is independent of the (arbitrarily small) spatial mesh width from the semidiscretization.
Based on the convergence analysis in the present paper and the stability results from \cite{IHM11,M14}, we recommend to select, in the application to these equations, the parameter of the MCS scheme such that $\theta \ge \tfrac{1}{3}$.
Numerical experiments further indicate that a smaller parameter value often yields a smaller error constant.
In future research we wish to extend our convergence results, among others, to higher-dimensional problems and to problems with nonsmooth initial functions.

\section*{Acknowledgements}
The second author holds a PhD Fellowship of the Research Foundation--Flanders.

\newpage
\appendix

\setcounter{equation}{0}
\setcounter{theorem}{0}

\section{Proof of Lemma \ref{le:RealFunction}  }
\label{ap:prooflemma}
For the case $\delta = 0$ the result is trivial. Next, consider the case $0 < \delta < 1$.
To find the minimum of $f$ on its domain we determine first its stationary points in $(0,\infty)\times (0,\infty)$.
These are given by
\begin{equation}
\label{eq:afgeleidennul}
\left\{ \begin{array}{l}
f_{x}(x,y) = x \frac{\sqrt{1+y^{2}}}{\sqrt{1+x^{2}}} - \delta = 0, \\\\
f_{y}(x,y) = y \frac{\sqrt{1+x^{2}}}{\sqrt{1+y^{2}}} - \delta = 0.
\end{array} \right.
\end{equation}
From (\ref{eq:afgeleidennul}) it follows that
$$ 
xy = \delta^2,
$$
which yields that $x$ and $y$ are nonzero and
\begin{equation}
\label{eq:solutionsystem}
y = \frac{\delta^{2}}{x}. 
\end{equation}
From (\ref{eq:afgeleidennul}) it also follows that
$$ 
\frac{x}{1+x^{2}} = \frac{y}{1+y^{2}}.
$$
Inserting (\ref{eq:solutionsystem}) into this yields
$$ 
\frac{x}{1+x^{2}} = \frac{x\delta^{2}}{x^{2} + \delta^{4}}, 
$$
which simplifies to
$$ 
\left( 1 - \delta^{2} \right) x^{2} = \delta^{2}(1-\delta^{2}).  
$$
Because
$$
\eta := 
1 -  \delta^{2} >0,
$$
this factor can be divided out.
We thus conclude that
\begin{equation*}
\label{eq:valuex}
x = \delta >0,
\end{equation*} 
and by (\ref{eq:solutionsystem}),
\begin{equation*}
\label{eq:valuex}
y=\delta=x.
\end{equation*} 
Hence, the system (\ref{eq:afgeleidennul}) has precisely one solution, given by
\begin{equation}
\label{eq:minimumpoint}
(x,y) = \left(\delta, \delta  \right).
\end{equation} 
We next prove that $f$ possesses a relative minimum in its stationary point \eqref{eq:minimumpoint} by showing that $f_{xx}>0$, $f_{yy} > 0$ and $f_{xx}f_{yy} - f_{xy}^{2}>0$ in this point. 
For arbitrary $(x,y)$ there holds
\begin{eqnarray*}
f_{xx}(x,y) &=& \frac{\sqrt{1+y^{2}}}{(1+x^{2})^{3/2}} > 0, \\
f_{yy}(x,y) &=& \frac{\sqrt{1+x^{2}}}{(1+y^{2})^{3/2}} > 0, \\
f_{xy}(x,y) &=& \frac{xy}{\sqrt{1+x^{2}}\sqrt{1+y^{2}}}
\end{eqnarray*}
and
$$
(f_{xx}f_{yy} - f_{xy}^{2})(x,y) = \frac{1-x^{2}y^{2}}{(1+x^{2})(1+y^{2})} = \frac{(1-xy)(1+xy)}{(1+x^{2})(1+y^{2})}.
$$
It is therefore sufficient to prove that $1-xy $ is strictly positive in the point (\ref{eq:minimumpoint}) and indeed $1-\delta^{2} = \eta >0$.
Hence, $f$ has a relative minimum in (\ref{eq:minimumpoint}), where it takes the value
\begin{eqnarray*}
\label{eq:minimum}
1 + \delta^{2} -  \delta (\delta + \delta) = \eta > 0.
\end{eqnarray*}
It remains to prove that on the boundary of its domain $f$ is greater than the value $\eta$.
First,
\begin{eqnarray*}
f(x,y) &\geq & \sqrt{1+x^{2}} - \delta(x + y) \\
& \geq & x - \delta(x + y) \\
& = & (1-\delta) x - \delta y.
\end{eqnarray*}
Thus for any given fixed $y \in \R^{+}$ there holds
$$ 
\lim_{x \rightarrow \infty} f(x,y) = \infty. 
$$
Since $f(x,y) = f(y,x)$ for all $(x,y)$ in the domain of $f$, it also holds for any given fixed $x \in \R^{+}$ that
$$ 
\lim_{y \rightarrow \infty} f(x,y) = \infty. 
$$
We finally show that $f$ is always greater than $\eta$ whenever $x=0$ or $y=0$. 
By the same symmetry argument as above, it suffices to consider only $y=0$.
Define 
$$ 
g: \R^{+} \rightarrow \R : x \rightarrow \sqrt{1+x^{2}} - \delta x, 
$$
so that $g(x) = f(x,0)$. 
Then
$$ 
g(0) = 1 > \eta \quad {\rm and} \quad \lim_{x \rightarrow \infty} g(x) \geq \lim_{x \rightarrow \infty} (1-\delta)x = \infty. 
$$
Next,
$$ 
\left\{ \begin{array}{l}
g_{x}(x) = \displaystyle\frac{ x}{\sqrt{1+x^{2}}} - \delta, \\\\
g_{xx}(x) =  \displaystyle\frac{1}{(1+x^{2})^{3/2}} > 0.
\end{array} \right.
$$
Putting $g_{x}(x) = 0$, it readily follows that $g$ has one relative minimum, which is in the point
$$ 
x = \sqrt{\frac{1-\eta}{\eta}},
$$
where it takes the value
$$
g(x) = \sqrt{\eta}.
$$
 Since
$$ 
\min_{x \in \R^{+}} g(x) = \sqrt{\eta} > \eta
$$
the proof is complete for $0<\delta<1$. 
For the case $\delta = 1$ the result of the lemma is easily obtained by a continuity argument.
\begin{flushright}
\mbox{\tiny $\blacksquare$}
\end{flushright}

\vfill \eject

\end{document}